\documentclass[a4paper,12pt]{amsart}

\usepackage{latexsym}
\usepackage{amssymb}
\usepackage{graphicx}
\usepackage{amsthm}
\usepackage{epsfig}
\RequirePackage{color}

\usepackage{amscd,enumerate}
\usepackage{amsfonts}

\input xy
\xyoption{all}

\usepackage{multicol}
\usepackage{hyperref}
\hypersetup{ colorlinks,
linkcolor=blue,
filecolor=green,
urlcolor=blue,
citecolor=blue }

\addtolength{\textwidth}{4cm} \addtolength{\oddsidemargin}{-2cm} \addtolength{\evensidemargin}{-2cm}
\textheight=22.15truecm

%\vfuzz2pt \hfuzz2pt

%\font\got=eufm10
%\font\gots=eufm10 at 7pt
%\addtolength{\textwidth}{4cm} \addtolength{\oddsidemargin}{-2cm}
%\addtolength{\evensidemargin}{-2cm} \textheight=22.15truecm

%\numberwithin{equation}{section}
\setcounter{secnumdepth}{4}

\newtheorem{lemma}{Lemma}[section]
\newtheorem{corollary}[lemma]{Corollary}
\newtheorem{theorem}[lemma]{Theorem}
\newtheorem{proposition}[lemma]{Proposition}
\newtheorem{remark}[lemma]{Remark}
\newtheorem{definition}[lemma]{Definition}
\newtheorem{definitions}[lemma]{Definitions}
\newtheorem{example}[lemma]{Example}
\newtheorem{examples}[lemma]{Examples}

\newcommand{\supp}{\operatorname{supp}}
\def\a{\alpha}

\def\N{\mathbb N}
\def\geo{G_E^{(0)}}
\def\uhs{U_{H,S}}
\def\te{\mathcal{T}_E}
\def\oe{\mathcal{O}_E}

\def\lgr{{\mathcal L}_{gr}}

\definecolor{turquoise2}{rgb}{0,0.898039,0.933333}
\definecolor{magenta}{rgb}{1,0,1}

\begin{document}

\subjclass[2010]{Primary 16D70; Secondary  18B40, 06B10} \keywords{Leavitt path algebra, Steinberg algebra, ample groupoid, direct sum decomposition}

\title{Using Steinberg algebras to study decomposability of  Leavitt path algebras}

\author[L.O. Clark]{Lisa Orloff Clark}
\address{L.O. Clark: Department of Mathematics and Statistics, University of Otago, PO Box 56, Dunedin 9054, New Zeland}
\email{lclark@maths.otago.ac.nz}

\author[D. Mart\'{\i}n]{Dolores Mart\'{\i}n Barquero}
\address{D. Mart\'{\i}n Barquero: Departamento de Matem\'atica Aplicada, Escuela T\'ecnica Superior de Ingenieros Industriales, Universidad de M\'alaga. 29071 M\'alaga. Spain.}
\email{dmartin@uma.es}

\author[C. Mart\'{\i}n]{C\'andido Mart\'{\i}n Gonz\'alez}
\address{C. Mart\'{\i}n Gonz\'alez:  Departamento de \'Algebra Geometr\'{\i}a y Topolog\'{\i}a, Fa\-cultad de Ciencias, Universidad de M\'alaga, Campus de Teatinos s/n. 29071 M\'alaga. Spain.}
\email{candido@apncs.cie.uma.es}

\author[M. Siles ]{Mercedes Siles Molina}
\address{M. Siles Molina: Departamento de \'Algebra Geometr\'{\i}a y Topolog\'{\i}a, Fa\-cultad de Ciencias, Universidad de M\'alaga, Campus de Teatinos s/n. 29071 M\'alaga.   Spain.}
\email{msilesm@uma.es}

\thanks{
The first author is supported by the Marsden grant 15-UOO-071 from the Royal Society of New Zealand and a University of Otago Research Grant.
The last three authors are supported by the Junta de Andaluc\'{\i}a and Fondos FEDER, jointly, through projects  FQM-336 and FQM-7156. They are also supported by the Spanish Ministerio de Econom\'ia y Competitividad and Fondos FEDER, jointly, through project  MTM2013-41208-P.
\newline
This research took place while the first author was visiting the Universidad de M\'alaga. She thanks her coauthors for their hospitality.}

\begin{abstract} 
Given an arbitrary graph $E$ we investigate the relationship between $E$ and the groupoid $G_E$.
 We show that there is a lattice isomorphism between the lattice of pairs $(H, S)$, where $H$ is a hereditary and saturated set of vertices and $S$ is a set of breaking vertices {associated to $H $}, onto the lattice of open invariant subsets of $\geo$. We use this lattice isomorphism to characterize the decomposability of the Leavitt path algebra $L_K(E)$, where $K$ is a field. 
 First we find a graph condition to characterise when an open invariant  subset of $\geo$ is closed.
 Then we give both a graph condition and a groupoid condition each of which is equivalent to $L_K(E)$ being decomposable {in the sense that it can be written as a direct sum of two nonzero ideals}.
\end{abstract}
\maketitle

\section{Introduction}
Leavitt path algebras were introduced in \cite{AA1} and \cite{AMP} as a generalization of Leavitt algebras (\cite{L3}). They are the algebraic counterpart of operator algebras associated to directed graphs (\cite{R}). They have attracted the attention of both, ring theorists and operator algebraists and the techniques that emerged to study these algebras have typically been algebraic, analytic, or a mix of both. In this paper our techniques are more topological in nature.

In  \cite{CFST}, the authors show that each Leavitt path algebra $L_K(E)$
is isomorphic to a \emph{Steinberg algebra} defined as follows:
first, one constructs a topological groupoid $G_E$ from the directed
graph $E$. Next, the Steinberg algebra associated to 
$E$ is the convolution algebra of locally constant, compactly supported
functions from $G_E$ into the field $K$.

In this paper, we use the Steinberg algebra
model of a Leavitt path algebra to determine its algebraic decomposability.
An algebra $A$ is called decomposable if it can be written as the direct sum of two nonzero ideals. For Leavitt path algebras it turns out that decomposability is equivalent to graded decomposability (see Lemma \ref{lem.equivDec}).

In our main theorem  we show that the decomposability of $L_K(E)$ is equivalent to the topological decomposability of $G_E$. Moreover, we show that $L_K(E)$
 is decomposable if and only if
there exists a nonempty, proper,  hereditary and saturated subset $H$ of $E^0$ such
that two additional graph conditions are satisfied: 
\begin{enumerate}
\item[(a)] Every infinite path whose vertices are outside of $H$ eventually does not connect to
$H$.
\item [(b)] Every infinite emitter having an infinite number of edges connecting to $H$  must be either in $H$ or a breaking vertex for $H$. 
\end{enumerate}
\noindent
(See the conditions in  Theorem \ref{thm:decomp} (\ref{it3:decomp}).)

Before proving our main result (Theorem \ref{thm:decomp}), we show how some graph properties and groupoid properties are connected. Consider the
lattice $\te$ of pairs $(H,S)$ where $H$ is a hereditary saturated 
subset of vertices and $S$ is a set of breaking vertices of $H$. We are interested in $\te$ because this lattice is isomorphic to the lattice of graded ideals of the Leavitt path algebra (\cite[Theorem 2.5.6]{AAS}).

We establish a lattice isomorphism from $\te$ onto
the lattice $\oe$ of open invariant  subsets of $\geo$ (see Theorem~\ref{thm:Lattice}).
This result generalises \cite[Lemma 6.4]{CEAaHS} to graphs that are not necessarily row-finite.
We remark that for row-finite graphs without sinks, Lemma 6.4 in \cite{CEAaHS} says that there is an isomorphism from the lattice of hereditary and saturated subsets of vertices onto $\oe$, which is not surprising.
 However when infinite emitters are present the appropriate lattice  to be considered is $\te$, which involves not only  hereditary and saturated sets but also to breaking vertices, while the lattice $\oe$ remains the same.
 
We find our lattice isomorphism from $\te$ to $\oe$ very useful:  The lattice structure of $\te$ is complicated.
In particular, the explicit formula for the $\vee$ relation is notoriously difficult to define.
However, the lattice structure of $\oe$ is the standard one $(\subseteq, \cup, \cap)$. Moreover, we prove that there is also a lattice isomorphism between $\oe$ and that of the graded ideals of  the Steinberg algebra $A_K(G_E)$ (see Corollary \ref{cor:LatticeIsom}).

The last ingredients we need to prove Theorem \ref{thm:decomp} are necessary and sufficient
conditions on a pair $(H,S)$ to ensure the corresponding element
$U_{H,S} \in \oe$ is closed, which we give in
Proposition~\ref{prop:closed}. In fact, the set $S$ of breaking vertices is completely determined as it must be the set of all breaking vertices for the hereditary and saturated set $H$.

Leavitt path algebra decomposability is also studied in \cite{AN}. In the final section we compare our results to those in \cite{AN}. 
We expected our decomposibility conditions to be equivalent to the characterisation given
in \cite[Theorem~4.2]{AN}.  However, we find that
\cite[Theorem~4.2]{AN} is incorrect; see Example 
\eqref{exm:error} for a counterexample.  
In Theorem \ref{thm:nuevaprueba} we provide a modified formulation which also appears in \cite[Proposition~4.5]{AN}
 and \cite[Proposition~4.6]{AN}.
We then show the equivalence between the graph conditions in our Theorem \ref{thm:decomp} and the graph condition in Theorem \ref{thm:nuevaprueba}.

%%%%%%%%%%%%%%%%%%%%%%%%%%%%%%
%%%%%%%%%%%%%%%%%%%%%%%%%%%%%%
\section{Preliminaries}

\noindent \textbf{Directed Graphs.}   A \emph{directed graph} is a 4-tuple $E=(E^0, E^1, r_E, s_E)$ 
consisting of two disjoint sets $E^0$, $E^1$ and two maps
$r_E, s_E: E^1 \to E^0$. The elements of $E^0$ are the \emph{vertices} and the elements of 
$E^1$ are the edges of $E$.  Further, for $e\in E^1$, $r_E(e)$ and $s_E(e)$ are 
called the \emph{range} and the \emph{source} of $e$, respectively. 
If there is no confusion with respect to the graph we are considering, we simply write $r(e)$ and $s(e)$.

A vertex which emits no edges is called a \emph{sink}. 
A vertex $v$ is called an \emph{infinite emitter} if $s^{-1}(v)$ is an infinite set, and a \emph{regular vertex} if it is neither a sink nor an infinite emitter. 
The  set of infinite emitters will be denoted by ${\rm Inf}(E)$ while ${\rm Reg}(E)$ will denote the set of regular vertices.

In order to define the Leavitt path algebra, we need to introduce the  
{\it extended graph of} $E$.  This is the graph 
$\widehat{E}=(E^0,E^1\cup (E^1)^*, r_{\widehat{E}}, s_{\widehat{E}}),$ where
$(E^1)^*=\{e_i^* \ | \ e_i\in  E^1\}$ and the functions $r_{\widehat{E}}$ and $s_{\widehat{E}}$ are defined as 
\[{r_{\widehat{E}}}_{|_{E^1}}=r,\ {s_{\widehat{E}}}_{|_{E^1}}=s,\
r_{\widehat{E}}(e_i^*)=s(e_i), \hbox{ and }  s_{\widehat{E}}(e_i^*)=r(e_i).\]
The elements of $E^1$ will be called \emph{real edges}, while for $e\in E^1$ we will call $e^\ast$ a
\emph{ghost edge}.    

A nontrivial \emph{path} $\mu$ in a graph $E$ is a finite sequence of edges $\mu=\mu_1\dots \mu_n$
such that $r(\mu_i)=s(\mu_{i+1})$ for $i=1,\dots,n-1$.\footnote{We warn the reader that this convention
has not been universally adopted and some of our references use the path convention of \cite{R}, where $s(\mu_i)=r(\mu_{i+1})$.}
In this case, $s(\mu):=s(\mu_1)$ and $r(\mu):=r(\mu_n)$ are the
\emph{source} and \emph{range} of $\mu$, respectively, and $n$ is the \emph{length} of $\mu$. We also say that
$\mu$ is \emph{a path from $s(\mu_1)$ to $r(\mu_n)$} and denote by $\mu^0$ the set of its 
vertices, i.e.,
$\mu^0:=\{s(\mu_1),r(\mu_1),\dots,r(\mu_n)\}$. By convention $r(\mu_0):=s(\mu)$.

We view the elements of $E^{0}$ as paths of length $0$. The set of all (finite) paths of a graph $E$ is denoted by ${\rm Path}(E)$.
In this paper we
will also consider infinite paths, i.e., infinite sequences $x=x_1x_2\dots$, where $x_i\in E^1$ and 
$r(x_i)=s(x_{i+1})$ for every $i\in \N$.   We denote the set of all infinite paths by $E^{\infty}$. We also define $s(x):=s(x_1)$.

For vertices $u, v$ we say $u \ge v$ whenever there is a path $\mu$ such that $s(\mu)=u$ and $r(\mu)=v$, 
A subset $H$ of $E^0$ is called \emph{hereditary} if $v\ge w$ and $v\in H$ imply $w\in H$. A
hereditary set is \emph{saturated} if every regular vertex which feeds into $H$ and only into $H$ is again
in $H$, that is, if $s^{-1}(v)\neq \emptyset$ is finite and $r(s^{-1}(v))\subseteq H$ imply $v\in H$. 

Let $E$ be a graph. For every non empty hereditary subset $H$ of $E^0$, define
$$F_E(H)=\{ \a\in {\rm Path}(E) \mid \a_i\in
E^1, s(\a_1)\in E^0\setminus H, r(\a_i)\in E^0\setminus H \mbox{ for } i<\vert \alpha\vert, r(\a_{\vert \alpha\vert})\in H\}.$$ 

\medskip

\noindent \textbf{Leavitt path algebras.}   Given a (directed) graph $E$ and a field $K$, the {\it path $K$-algebra} of $E$,
denoted by $KE$, is defined as the free associative $K$-algebra generated by the
set of paths of $E$ with relations:
\begin{enumerate}
\item[(V)] $vw= \delta_{v,w}v$ for all $v,w\in E^0$.
\item [(E1)] $s(e)e=er(e)=e$ for all $e\in E^1$.
\end{enumerate}
The {\it Leavitt path algebra of} $E$ {\it with coefficients in} $K$, denoted $L_K(E)$, 
is the quotient of the path algebra $K\widehat{E}$ by the ideal of $K\widehat{E}$ generated by the relations:
\begin{enumerate}    
\item[(CK1)] $e^*e'=\delta _{e,e'}r(e) \ \mbox{ for all } e,e'\in E^1$.
\item[(CK2)] $v=\sum _{\{ e\in E^1\mid s(e)=v \}}ee^* \ \ \mbox{ for every}\ \ v\in  {\rm Reg}(E).$
\end{enumerate}
Observe that in $K\widehat{E}$ the relations (V) and (E1) remain valid and that the following is also satisfied:
\begin{enumerate}
\item [(E2)] $r(e)e^*=e^*s(e)=e^*$ for all $e\in E^1$.
\end{enumerate}
\medskip

One can show
\[L_K(E) = \operatorname{span} \{\alpha\beta^{\ast } \ \vert \ \alpha, \beta \in {\rm Path}(E)\}\]
 and that $L_{K}(E)$ is a $\mathbb{Z}$-graded $K$-algebra. 
In particular, for each $n\in\mathbb{Z}$, the degree $n$ component $L_{K}(E)_{n}$ is spanned by the set  
\[\{\alpha \beta^{\ast }\ \vert \  \alpha, \beta \in {\rm Path}(E)\ \hbox{and}\  \mathrm{length}(\alpha)-\mathrm{length}(\beta)=n\}.\] 
%Denote by $h(L_K(E))$ the set of all homogeneous elements in $L_K(E)$, that is,  $$h(L_K(E)):= \cup_{n\in\mathbb{Z}}L_{K}(E)_{n}.$$

Our notation for elements of $L_K(E)$ is standard.  However, in this paper, we often consider elements 
of the set ${\rm Path}(E)$.  In particular,  we will not assume any of the structure that comes with elements of the quotient 
$L_K(E)$.  We hope this will not be too confusing and we try to make clear when
we are working in $L_K(E)$.

\medskip

\noindent\textbf{Groupoids and Steinberg algebras.}  A groupoid $G$ is a generalisation of a group in which the `binary operation' is only partially defined. For a 
precise definition see \cite{Renault}.  The \emph{unit space} of $G$ is denoted $G^{(0)}$  and is defined such that
\[G^{(0)} := \{\gamma \gamma^{-1} \ \vert \ \gamma \in G\} = \{\gamma^{-1}\gamma \ \vert \ \gamma \in G\}.\]
Groupoid \emph{source} and \emph{range} maps $r,s:G \to G^{(0)}$ are defined such that
\[
 s(\gamma) = \gamma^{-1} \gamma \quad \text{ and } \quad r(\gamma) = \gamma \gamma^{-1}.
\]
Although the notation and terminology of $r$ and $s$ have both a graphical and a groupoid interpretation,
it will be clear from context which one we mean.

Steinberg algebras, introduced in \cite{Steinberg}, are algebras associated to `{ample}' groupoids and 
are defined 
as follows.   First, a \emph{topological groupoid} is a groupoid equipped with a topology such that
composition and inversion are continuous.  Next, we say an open subset $B$ of a groupoid $G$ is 
an \emph{open bisection}
if $r$ and $s$ restricted to $B$ are homeomorphisms onto  an open subset of $G^{(0)}$.  
Finally, we say a topological groupoid $G$ is \emph{ample} if there is a basis for its topology 
consisting of compact open bisections.

Given an ample groupoid $G$ and a field $K$, the \emph{Steinberg algebra associated to} $G$ is
\[A_K(G) := \operatorname{span} \{1_B \ \vert \ B \text{ is a compact open bisection}\}\] 
where $1_B:G \to K$ is the characteristic function of $B$.  Addition and scalar multiplication 
in $A_K(G)$ are defined pointwise.  Multiplication is given by the convolution 
$$f\ast g(\gamma)= \sum_{\alpha\beta=\gamma}f(\a)g(\beta).$$

For compact open bisections $B$ and $D$ the convolution gives $1_B 1_D = 1_{BD}.$ In this paper we will omit the $\ast$ symbol for the convolution.

In the remainder of this section, we recall how the Leavitt path algebra is isomorphic to a 
Steinberg algebra.  
The following construction of a groupoid $G_E$ from an arbitrary graph $E$ can be found 
in \cite{Pat2002} and is a generalisation of the construction found in \cite{KPRR} 
in that it allows  graphs with infinite emitters and/or sinks.  Note that  \cite{Pat2002} considers only
countable graphs.  Therefore
we follow the construction of \cite[Example~2.1]{CS} which does not require  countability. 
Define
\begin{align*}X &:= E^\infty \cup \{\mu \in {\rm Path}(E) \mid r_E(\mu)\text{ is a sink}\} \cup 
	\{\mu \in {\rm Path}(E) \ \vert \ r_E(\mu) \in \operatorname{Inf}(E)\} \text{ and }\\
G_E &:= \{(\alpha x, |\alpha| - |\beta|, \beta x) \ \vert \ \alpha,\beta \in {\rm Path}(E),
    x \in X, r_E(\alpha) = r_E(\beta) = s_E(x)\}.\end{align*}
A pair of elements in $G_E$ is composable if and only if they are of the form $((x,k,y),(y,l,z))$ and then
the composition and inverse maps are defined such that
\[
    (x,k,y)(y,l,z) := (x, k+l, z) \quad\text{ and }\quad (x,k,y)^{-1} := (y,-k,x).
\]
Thus
\[\geo =\{(x, 0, x) \ \vert \ x \in X\} \text{ which we identify with $X$}.\]

 \begin{remark}\label{rmk:completion}
\rm
For any $v\in E^0$ there exists $y_v\in \geo$ such that $s(y_v)=v$. Indeed, if $v$ is an infinite emitter or a sink, then $y_v=v$. Otherwise $v\in {\operatorname{Reg}}(E^0)$. Fix $e_1\in s^{-1}(v)$. If $r(e_1)$ is an infinite emitter or a sink, then we may choose $y_v=e_1$. If it is not, then $r(e_1)\in {\operatorname{Reg}}(E^0)$ and we repeat the same process in order to set $y_v=e_1e_2\dots$ as claimed.
\end{remark}

Next we show how $G_E$ can be viewed as an ample groupoid. For $\mu \in {\operatorname{Path}}(E)$ define
\[
    Z(\mu):= \{\mu x \mid x \in X, r(\mu)=s(x)\} \subseteq X
\] and for $\mu \in {\rm Path}(E)$ and a finite $F \subseteq s^{-1}(r(\mu))$, define
\[
Z(\mu\setminus F) := Z(\mu) \cap \Big(\geo \setminus \bigcup_{\alpha \in F} Z(\mu\alpha)\Big).
\]

Then the sets of the form $Z(\mu \setminus F)$ are a basis of compact open sets for a 
Hausdorff topology on $X = \geo$ by \cite[Theorem~2.1]{Webster:xx11}.
\medskip

We point out that the topological structure around infinite emitters can be surprising. For example, assume that $v$ is an infinite emitter and that $\{e_n\}\subseteq s^{-1}(v)$ is an infinite set. Then 
 $e_ny_{r(e_n)} \to v$.
\medskip

Now for each $\mu,\nu \in {\rm Path}(E)$ with $r(\mu) = r(\nu)$, 
and finite $F \subseteq {\rm Path}(E)$ such that $r(\mu)=s(\alpha)$ 
for all $\alpha \in F$, define
\[
Z(\mu,\nu) := \{(\mu x, |\mu| - |\nu|, \nu x) \ \vert \ x \in X, r(\mu)= s(x)\},
\]
and then
\[
Z((\mu,\nu) \setminus F) := Z(\mu,\nu) \cap
            \Big(\geo\setminus\bigcup_{\alpha \in F} Z(\mu\alpha,\nu\alpha)\Big).
\]
Now the collection $Z((\mu,\nu) \setminus F)$ forms a basis of 
compact open bisections that generate a topology such that
$G_E$ is a Hausdorff ample groupoid.  
\bigskip

Finally, putting everything together we have a graded isomorphism 
\begin{equation}
 \label{def:pi} 
 \pi:L_K(E) \to A_K(G_E) \text{ such that }  \pi(\mu\nu^\ast-\sum_{\alpha\in F}\mu\a\a^\ast\nu^\ast) =1_{Z((\mu, \nu)\setminus F)}
\end{equation}

\noindent 
as shown in  \cite[Example 3.2]{CS}.

\bigskip

%%%%%%%%%%%%%%%%%%%%%%%%%%%%%%%%%%%%%%%%%%%5

\bigskip
 
\section{Lattices} 

In this paper, one thing we focus on is the topological structure of \emph{invariant} subsets of $\geo$. 
We say that $U\subseteq \geo$ is invariant if $x\in U$ and $(x, k, y)\in G_E$ then $y\in U$. Equivalently, $U$ is invariant if $y\in U$ and $(x, k, y)\in G_E$ then $x\in U$. Note that if $U$ is invariant then $G_E\setminus U$ is also invariant. Let $\oe$ denote the set of open invariant subsets of $\geo$. Then $\oe$ is a lattice with the standard structure given by $(\subseteq, \cup, \cap)$.

\begin{remark}\label{rmk:shiftEquiv}
\rm 
Fix an invariant $U\subseteq \geo$. Let $x$ and $y$ be in $\geo$ such that $x$ is shift
equivalent to $y$ (i.e. $x$ and $y$ are eventually the same). Then $x\in U$ if and only if $y\in U$.
\end{remark}

In this section we show that there is a lattice isomorphism from the graph lattice $\te$ (defined below) onto the lattice $\oe$. 
Following \cite[Definitions 2.4.3]{AAS}, 
let $H$ be a hereditary subset  of $E^0$, and let $v\in E^0$.  We say that $v$ is a  \index{breaking vertex} \emph{breaking vertex of} $H$ if $v$ belongs
to the set
$$\index{$B_H$} B{_H}:=\{v\in E^0\setminus H \ \vert \ v\in {\rm Inf}(E) \ \text{and} \ 0 < \vert s^{-1}(v) \cap r^{-1}(E^0\setminus H)\vert < \infty\}.$$
In words, $B_H$ consists of those vertices which are infinite emitters, which do not belong to $H$, 
and for which the ranges of the edges they emit are all, except for a finite (but nonzero) number, inside $H$.
We set 
\[ \te:= \{(H,S) \ \vert \ H \text{ is satuarated and hereditary and } S \subseteq B_H\}.\]
These pairs were also considered in a $C^*$-context, for example in \cite{HS, DT}. 
We define the following relation on $\te$: 
\[ 
(H_1, S_1)\leq (H_2, S_2)\ \text{ if and only if }
H_1 \subseteq H_2 \ \ \text{and} \ \ 
S_1\subseteq S_2\cup H_2.
\]
Then $\te$ has a lattice structure with explicit formulas for $\vee$ and $\wedge$ given in \cite[Remark~3.4]{AN} (see also 
\cite[Proposition 2.5.4]{AAS}).
Note that every graded ideal in a Leavitt path algebra is generated by a pair $(H, S)$ as defined in  
\cite[Definition 2.5.3]{AAS} (see Corollary~\ref{cor:LatticeIsom}). 
  
\begin{definitions}\label{def:USub}
\rm
Let $(H,S)\in\te $. Then define
\begin{align}
 U_{H}&:=\{x\in\geo \ \vert\ r(x_n)\in H \text{ for some } n\},\label{UH}\\
 U_S& :=\{\alpha\in {\rm Path}(E)\ \vert\ r(\alpha)\in S \} \label{US} \text{ and}\\
 U_{H,S}&:=U_H\sqcup U_S. \label{UHS}\\
\intertext{On the other hand, given $U\in\oe$, define}
H_U&:=\{v\in E^0\ \vert\ Z(v)\subseteq U\}\label{HU} \text{ and } \\
S_U&:=\{r(\alpha)\ \vert\ \alpha\in U\cap {\rm Path}(E),\  r(\alpha)\text{ is an infinite emitter and } 
r(\alpha)\not\in H_U \} \subseteq U.\label{SU}
\end{align}
\end{definitions}
\begin{theorem}\label{thm:Lattice}
 Suppose $E$ is an arbitrary graph. 
Then the map $\phi:  {\mathcal T}_E  \to {\mathcal O}_E$ given by:
$$\phi ((H,S)) =   U_{H,S},$$
is a lattice isomorphism whose inverse is the map $ \rho: {\mathcal O}_E \to  {\mathcal T}_E$ given by
$$\rho(U)= (H_U, S_U).$$
 \end{theorem}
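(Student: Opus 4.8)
The plan is to verify that $\phi$ and $\rho$ are mutually inverse, order-preserving maps: since an order-preserving bijection whose inverse is also order-preserving automatically respects $\vee$ and $\wedge$, this suffices to conclude that $\phi$ is a lattice isomorphism. I would split the work into four parts: (1) $\phi$ is well defined, i.e.\ $U_{H,S}\in\oe$; (2) $\rho$ is well defined, i.e.\ $(H_U,S_U)\in\te$; (3) $\rho\circ\phi=\mathrm{id}$ and $\phi\circ\rho=\mathrm{id}$; and (4) both maps preserve order.

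For (1) I would first show $U_H$ is open by observing that if $x\in U_H$ meets $H$ at $r(x_n)$, then the cylinder $Z(x_1\cdots x_n)$ is a neighbourhood of $x$ inside $U_H$, using that $H$ is hereditary; invariance of $U_H$ follows from Remark \ref{rmk:shiftEquiv}, since hereditariness makes ``$x$ meets $H$'' a tail-equivalence-invariant property. For the $U_S$ part the key point is that a path $\alpha$ with $r(\alpha)=v\in S\subseteq B_H$ admits the basic neighbourhood $Z(\alpha\setminus F)$, where $F$ is the finite set of edges out of $v$ with range outside $H$; every element of $Z(\alpha\setminus F)$ other than $\alpha$ continues along an edge into $H$ and so lies in $U_H$, giving $Z(\alpha\setminus F)\subseteq U_{H,S}$. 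Invariance of $U_S$ holds because a finite path ending at an infinite emitter $v$ is tail-equivalent exactly to the finite paths ending at $v$. I would also record that $U_H\cap U_S=\emptyset$, since a path ending at $v\notin H$ cannot meet $H$, so the union in $U_{H,S}$ is genuinely disjoint.

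For (2), hereditariness and saturation of $H_U$ both reduce to invariance: if $v\in H_U$ and $v\ge w$ via $\mu$, then for $x\in Z(w)$ we have $\mu x\in Z(v)\subseteq U$ and $(\mu x,|\mu|,x)\in G_E$ forces $x\in U$, so $w\in H_U$; saturation is the same computation carried along the finitely many edges of a regular vertex. To see $S_U\subseteq B_{H_U}$, I would take $v=r(\alpha)\in S_U$, choose a basic neighbourhood of $\alpha$ inside $U$, and shift it forward along each admissible edge to conclude that all but finitely many edges out of $v$ land in $H_U$; that $v$ emits at least one edge outside $H_U$ follows because otherwise all of $Z(v)$ would lie in $U$, forcing $v\in H_U$. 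The heart of the proof is (3). For $\rho\circ\phi=\mathrm{id}$, the inclusion $H\subseteq H_{U_{H,S}}$ and the computation $S_{U_{H,S}}=S$ are routine, but the reverse inclusion $H_{U_{H,S}}\subseteq H$ is the main obstacle: given $v\notin H$ I must exhibit an element of $Z(v)$ outside $U_{H,S}$. I would build a path from $v$ staying outside $H$, at each step extending along an edge whose range avoids $H$ (saturation supplies such an edge at a regular vertex, and a breaking vertex emits one by definition), stopping only at a sink or at an infinite emitter not in $S$ all of whose edges enter $H$; the resulting path is either infinite, or finite ending at a sink, or finite ending at an infinite emitter outside $S$, hence lies outside $U_S$, and since it avoids $H$ it lies outside $U_H$, contradicting $Z(v)\subseteq U_{H,S}$.

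For $\phi\circ\rho=\mathrm{id}$, the inclusion $U_{H_U,S_U}\subseteq U$ again uses invariance as in (2), while for $U\subseteq U_{H_U,S_U}$ I would take $x\in U$, pick a basic neighbourhood $Z(\mu\setminus F)\subseteq U$ with $\mu$ a prefix of $x$, and shift forward to see that every edge out of $r(\mu)$ not in $F$ has range in $H_U$; then either $x$ strictly extends $\mu$, so $x$ meets $H_U$ and lies in $U_{H_U}$, or $x=\mu$, in which case $r(\mu)$ is a sink forcing $r(\mu)\in H_U$, or an infinite emitter, which by definition places $r(x)$ in $S_U$. Finally, for (4), order-preservation of $\phi$ and of $\rho$ is a direct check against the definitions of $\le$ on $\te$ and $\subseteq$ on $\oe$; the only subtle point is that the defining inequality $S_1\subseteq S_2\cup H_2$ is exactly what is needed, because a path ending at a vertex of $H_2$ already lies in $U_{H_2}$. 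Together with (3) this identifies $\phi$ as an order isomorphism, hence a lattice isomorphism with inverse $\rho$. I expect the delicate handling of infinite emitters and breaking vertices—both in the path-avoiding-$H$ construction and in extracting a basic neighbourhood near a finite path ending at an infinite emitter—to be where the real care is required.
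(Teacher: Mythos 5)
Your proposal is correct and follows essentially the same route as the paper's proof: well-definedness of $\phi$ and $\rho$ (Lemmas \ref{lem:OpInv} and the lemma following it), the two composition identities (Lemmas \ref{lem:surjectivity} and \ref{lem:injectivity}), order preservation of both maps (Lemma \ref{lem:OrderPreserv}), and the standard fact that an order-preserving bijection with order-preserving inverse is a lattice isomorphism. The only (harmless) variation is in proving $H_{U_{H,S}}\subseteq H$: the paper arranges the constructed $H$-avoiding path to be infinite (intermediate infinite emitters are forced into $S$, hence into $B_H$), whereas you allow the construction to terminate at a sink or at an infinite emitter all of whose edges enter $H$ and derive the contradiction case by case---an equivalent argument, and you are in fact slightly more careful than the paper in checking the ``at least one edge outside $H_U$'' requirement in $S_U\subseteq B_{H_U}$.
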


We  will give the proof in a series of lemmas.
\begin{lemma}\label{lem:OpInv}
Fix $(H,S)\in \te$,
then  $U_{H,S}\in\oe$. 
\end{lemma}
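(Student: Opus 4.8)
The plan is to verify the two defining properties of an element of $\oe$ separately: that $U_{H,S}$ is invariant, and that it is open. I would begin by unpacking the definition $U_{H,S} = U_H \sqcup U_S$, where $U_H$ consists of infinite paths (or terminating paths in $X$) that eventually reach a vertex in $H$, and $U_S$ consists of the finite paths in $X$ whose range lies in the breaking-vertex set $S$. The key observation driving everything is that invariance is about shift-equivalence: by Remark~\ref{rmk:shiftEquiv}, whether a point lies in an invariant set depends only on its tail. So for invariance I would take $x \in U_{H,S}$ and $(x,k,y) \in G_E$, which forces $x$ and $y$ to be shift-equivalent (they are eventually equal). I would then split into the two cases $x \in U_H$ and $x \in U_S$ and check $y$ lands in the appropriate piece.

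For the $U_H$ case, if $r(x_n) \in H$ for some $n$, then since $x$ and $y$ share a common tail, $y$ eventually passes through the same vertex, so $r(y_m) \in H$ for some $m$ and $y \in U_H$. For the $U_S$ case, $x$ is a finite path in $X$ with $r(x) \in S \subseteq B_H$; since $r(x)$ is an infinite emitter, $x \in X$ is a genuine endpoint, and any $y$ with $(x,k,y)\in G_E$ must have the same range vertex $s_E$-wise — more precisely the shift-equivalence of finite paths ending at an infinite emitter forces $y$ to have the identical range vertex $r(x) \in S$, so $y \in U_S$. Here I would be careful about how the groupoid composability condition $r_E(\alpha)=r_E(\beta)=s_E(x)$ interacts with finite paths ending at infinite emitters, since those sit in $X$ as units.

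For openness, the strategy is to exhibit, for each point of $U_{H,S}$, a basic open neighbourhood $Z(\mu\setminus F)$ contained in $U_{H,S}$. If $x \in U_H$ with $r(x_n) \in H$, I would write $\mu = x_1 \cdots x_n$ (the initial segment reaching $H$) and observe that $Z(\mu) \subseteq U_H$ because hereditarity of $H$ guarantees every extension of a path reaching $H$ stays in $U_H$. If $\alpha \in U_S$ with $r(\alpha) = v \in S$, the neighbourhood is more delicate: $v$ is an infinite emitter and only finitely many of its edges — namely those in $s^{-1}(v)\cap r^{-1}(E^0\setminus H)$, a finite set $F$ — escape $H$. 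I would take $Z(\alpha \setminus F)$ and argue that any point in it either equals $\alpha$ itself or is of the form $\alpha e \cdots$ with $r(e) \in H$, hence lies in $U_H \subseteq U_{H,S}$.

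I expect the main obstacle to be the openness argument around breaking vertices, specifically showing $Z(\alpha\setminus F) \subseteq U_{H,S}$ with the correct finite set $F$. One must check that removing exactly the finitely many edges landing outside $H$ leaves a neighbourhood all of whose nontrivial extensions immediately enter $H$ (and thus $U_H$), while the single point $\alpha$ remains in $U_S$. Getting the bookkeeping right between the finite set $F$ used in the basic bisection and the finite set $s^{-1}(v)\cap r^{-1}(E^0\setminus H)$ defining the breaking vertex, and confirming that this $F$ is indeed finite and consists of paths of length one so that $Z(\alpha\setminus F)$ is a legitimate basic open set, is where the care is needed. The invariance argument, by contrast, should follow almost immediately once shift-equivalence is invoked.
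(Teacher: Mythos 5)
Your proposal is correct and follows essentially the same route as the paper's proof: openness via the neighbourhoods $Z(x_1\cdots x_n)$ (using heredity of $H$) and $Z(\alpha\setminus F)$ with $F = s^{-1}(r(\alpha))\cap r^{-1}(E^0\setminus H)$ finite precisely because $r(\alpha)\in S\subseteq B_H$, and invariance by splitting a shift-equivalent pair according to whether the point lies in $U_H$ or $U_S$. The one justification to tighten is the $U_H$ invariance case: sharing a common tail does not by itself imply that $y$ passes through \emph{the same} vertex of $H$ (that vertex may lie in the prefix of $x$ not shared with $y$), so, as the paper does, you must invoke heredity of $H$ to conclude that the common tail itself meets $H$ (possibly at its source vertex), whence $y\in U_H$.
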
  
\begin{proof}
First we prove that $U_{H,S}$ is open. Fix $x\in U_{H,S}$. If $x\in U_H$ then there exists an $n$ such that $r(x_n)\in H$.
Thus, $$x\in Z(x_1\cdots x_n)\subseteq U_H$$
because $H$ is hereditary. Now in the  case where  $x \in U_S$, define $$F:=\{e\in E^1\ \vert \ s(e)=r(x) \text{ and }
r(e)\notin H\}$$ 
which is finite by the definition of $S$. Then $x\in Z(x\setminus F)\subseteq U_{H,S}$.  

Next we show that $U_{H,S}$  is invariant. Consider $(\alpha x, k,\beta x)\in G_E$ such that $\beta x\in \uhs$. If $\beta x\in U_H$ then
since $H$ is hereditary, there exists some $n\in\N\cup\{0\}$  such that $r(x_n)\in H$. This implies that $\alpha x\in U_H$.  

On the other hand, if $\beta x\in U_S$,  then $r(\beta x)$ is an infinite emitter and equals  $r(\alpha x)$ so $\alpha x\in U_S$.
\end{proof}

\begin{lemma}
Fix $U\in\oe$, then $(H_U,S_U)\in\te$.
\end{lemma}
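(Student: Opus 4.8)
The plan is to verify the two defining conditions of membership in $\te$ separately: that $H_U$ is hereditary and saturated, and that $S_U\subseteq B_{H_U}$. Throughout, the workhorse is the invariance of $U$ together with the observation that for any path $\mu$ and any $z\in X$ with $s(z)=r(\mu)$ there is a groupoid element $(\mu z,|\mu|,z)\in G_E$ connecting $\mu z$ to $z$; invariance then transports membership in $U$ back and forth along such elements. Recall also that $Z(v)=\{x\in X\mid s(x)=v\}$, so $v\in H_U$ means exactly that every element of $\geo$ with source $v$ lies in $U$.

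For heredity I would take $v\in H_U$ and a path $\mu$ from $v$ to $w$, pick an arbitrary $x\in Z(w)$, observe that $\mu x\in Z(v)\subseteq U$, and apply invariance to $(\mu x,|\mu|,x)$ to conclude $x\in U$; hence $Z(w)\subseteq U$ and $w\in H_U$. For saturation, given a regular vertex $v$ with $r(s^{-1}(v))\subseteq H_U$, any $x\in Z(v)$ has positive length (as $v$ is neither a sink nor an infinite emitter), so $x=ey$ with $e\in s^{-1}(v)$ and $y\in Z(r(e))\subseteq U$; invariance applied to $(ey,1,y)$ gives $x\in U$, so $Z(v)\subseteq U$ and $v\in H_U$. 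Both arguments are routine once the groupoid-element bookkeeping is in place.

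The substantive part is $S_U\subseteq B_{H_U}$. Fix $v\in S_U$, witnessed by $\alpha\in U\cap{\rm Path}(E)$ with $r(\alpha)=v\in{\rm Inf}(E)$ and $v\notin H_U$; the requirements $v\in E^0\setminus H_U$ and $v\in{\rm Inf}(E)$ are then immediate. I would establish the finiteness $|s^{-1}(v)\cap r^{-1}(E^0\setminus H_U)|<\infty$ by invoking openness: since $U$ is open and the sets $Z(\mu\setminus F)$ form a basis, there is a finite $F\subseteq s^{-1}(v)$ with $Z(\alpha\setminus F)\subseteq U$. For each $e\in s^{-1}(v)\setminus F$ and each $w\in Z(r(e))$ one has $\alpha e w\in Z(\alpha\setminus F)\subseteq U$, and invariance applied to $(\alpha e w,|\alpha e|,w)$ yields $w\in U$; thus $Z(r(e))\subseteq U$, i.e.\ $r(e)\in H_U$. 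Consequently $s^{-1}(v)\cap r^{-1}(E^0\setminus H_U)\subseteq F$ is finite.

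Finally, for the nonvanishing $0<|s^{-1}(v)\cap r^{-1}(E^0\setminus H_U)|$, I would argue by contradiction: if every $e\in s^{-1}(v)$ had $r(e)\in H_U$, then every $ey\in Z(v)$ with $e\in s^{-1}(v)$ lies in $U$ by invariance, while the vertex $v$ itself lies in $U$ because invariance applied to $(\alpha,|\alpha|,v)$ carries $\alpha\in U$ down to $v$; hence $Z(v)\subseteq U$ and $v\in H_U$, contradicting $v\notin H_U$. This gives $v\in B_{H_U}$ and hence $(H_U,S_U)\in\te$. The main obstacle is the finiteness clause: it is the only place where the topology genuinely enters, and the right move is to extract a single basic neighbourhood $Z(\alpha\setminus F)\subseteq U$ and observe that $F$ already absorbs every edge from $v$ whose range escapes $H_U$.
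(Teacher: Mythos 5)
Your proof is correct and takes essentially the same route as the paper's: invariance of $U$ handles heredity and saturation, and a basic open neighbourhood contained in $U$ (you take one around the witnessing path $\alpha$, the paper takes one around the vertex $v$ itself, which lies in $U$ by invariance) supplies the finite set that absorbs every edge out of $v$ whose range escapes $H_U$. The only substantive addition is your explicit check of the positivity clause $0<|s^{-1}(v)\cap r^{-1}(E^0\setminus H_U)|$ in the definition of $B_{H_U}$, a point the paper's proof leaves implicit.
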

\begin{proof}
First we show that $H_U$ is hereditary. Fix $e\in E^1$ such that $s(e)\in H_U$. To see that $r(e)\in H_U$ it suffices to prove $Z(r(e))\subseteq U$. So fix $x\in Z(r(e))$. Then 
$x\in U$  because $U$ is invariant. 

Next we show $H_U$ is saturated. Fix a regular vertex $v$ such that $r(s^{-1}(v))\subseteq H_U$.
We must show $v\in H_U$, that is, $Z(v)\subseteq U$. In order to do this fix $x\in Z(v)$. We may write
$x=ey$  for some $e\in s^{-1}(v)$. Thus, $r(e)\in H_U$. Thus $Z(r(e))\subseteq U$. But now we have
$y\in Z(r(e))\subseteq U$. Therefore $x\in U$ again by invariance of $U$.

Finally we show $S_U\subseteq B_{H_U}$. Fix $v\in S_U$.  Note that $U$ invariant implies $v\in U$. 
Since $U$ is open, there exists a  finite $F\subseteq {\rm Path}(E)$ such that $Z(v\setminus F)\subseteq U$.
Define now $$F_1:=\{\alpha_1 \ \vert \ \a=\a_1\cdots\a_n \in F\}.$$
We claim that $$r(s^{-1}(v))\cap (E^0\setminus H_U)\subseteq r(F_1).$$ 
To prove the claim, fix a $w\in r(s^{-1}(v))\cap (E^0\setminus H_U)$. Then there is some $e\in E^1$ such
that $s(e)=v$ and $r(e)=w\not\in H_U$. We know that $Z(e)\subseteq Z(v)$ but since $w\notin H_U$ then $Z(w)\nsubseteq U$ and by invariance $Z(e)\nsubseteq U$ either.
Because $Z(v\setminus F)\subseteq U$, we must have $$Z(e)\cap(\bigcup_{\a\in F} Z(\a)) \neq\emptyset.$$
Consequently there is some $\a\in F$ such that $Z(e)\cap Z(\a)\neq \emptyset$ and in fact we must
have $\alpha_1=e$ and hence $w\in r(F_1)$, proving the claim. Since $F$ is finite, we have the finiteness of 
$r(s^{-1}(v))\cap (E^0\setminus H_U)$.  This, together with the fact that $v$ is
an infinite emitter implies $v\in B_{H_U}$. 
\end{proof}

\begin{lemma}\label{lem:surjectivity}
We have
$\phi\circ\rho=1_{\oe}$.
\end{lemma}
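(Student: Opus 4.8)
The goal is to prove $\phi \circ \rho = 1_{\oe}$, which means that for every open invariant $U \in \oe$, we have $U_{H_U, S_U} = U$. By the preceding lemmas, $(H_U, S_U) \in \te$, so $U_{H_U, S_U}$ is a well-defined element of $\oe$, and by Definition \ref{def:USub} it equals $U_{H_U} \sqcup U_{S_U}$. The plan is to prove the set equality by establishing both inclusions separately, treating the two pieces $U_{H_U}$ and $U_{S_U}$ according to whether a point of $\geo$ is an infinite path (or a path ending at a sink) versus a path ending at an infinite emitter.

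For the inclusion $U_{H_U, S_U} \subseteq U$, I would take $x \in U_{H_U, S_U}$. If $x \in U_{H_U}$, then $r(x_n) \in H_U$ for some $n$, which by definition \eqref{HU} means $Z(r(x_n)) \subseteq U$; since the shifted tail of $x$ lies in $Z(r(x_n))$, invariance of $U$ (together with Remark \ref{rmk:shiftEquiv} or directly via the groupoid element connecting $x$ to its tail) forces $x \in U$. If instead $x \in U_{S_U}$, then $x$ is a finite path with $r(x) \in S_U$, and by the very definition \eqref{SU} of $S_U$ there is some $\alpha \in U \cap {\rm Path}(E)$ with $r(\alpha) = r(x)$; since $r(\alpha) = r(x)$ is an infinite emitter, $x$ and $\alpha$ are two finite paths ending at the same infinite-emitter vertex, and invariance of $U$ (the groupoid contains $(x, |x|-|\alpha|, \alpha)$) gives $x \in U$.

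For the reverse inclusion $U \subseteq U_{H_U, S_U}$, I would fix $x \in U$ and split into cases based on the type of $x \in X$. The key tool here is that $U$ is open, so $x$ has a basic neighbourhood $Z(\mu \setminus F) \subseteq U$ for some finite $F$. If $x \in E^\infty$ or $r(x)$ is a sink, the neighbourhood collapses so that some initial segment $\mu$ of $x$ has $Z(\mu) \subseteq U$ with $F$ not blocking the relevant edge; tracing this, I expect to conclude $Z(r(\mu)) \subseteq U$ by invariance, whence $r(\mu) \in H_U$ and $x \in U_{H_U}$. If $x$ is a finite path ending at an infinite emitter $v = r(x)$, then either $v \in H_U$ (so $x \in U_{H_U}$) or $v \notin H_U$, in which case $x$ witnesses $r(x) = v \in S_U$ directly from \eqref{SU}, so $x \in U_{S_U}$.

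The main obstacle will be the case $x \in E^\infty$ (an infinite path): here I must convert the basic open neighbourhood $Z(\mu \setminus F) \subseteq U$ into the statement that $Z(w) \subseteq U$ for some vertex $w$ on $x$, i.e. genuinely produce a vertex in $H_U$. The subtlety is that $F$ removes finitely many extensions, so I cannot immediately claim $Z(r(\mu)) \subseteq U$; I will need to push $\mu$ further along $x$ past all the obstructing edges in $F$ (replacing $\mu$ by a longer initial segment $\mu' = x_1 \cdots x_m$ that avoids every $\mu\alpha$ with $\alpha \in F$) so that $x$'s tail sits in an unobstructed basic set, and then invoke invariance to upgrade $Z(\mu') \subseteq U$ to $Z(r(\mu')) \subseteq U$. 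Care is also needed to verify that the disjoint-union structure $U_{H_U} \sqcup U_{S_U}$ is respected, i.e. that no point is wrongly assigned to both pieces, which follows because $U_{S_U}$ only contains finite paths ending at infinite emitters lying outside $H_U$.
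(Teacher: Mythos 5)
Your proposal is correct and follows essentially the same route as the paper: both inclusions are proved by the same case analysis on elements of $\geo$ (infinite paths, paths ending at sinks, paths ending at infinite emitters), with invariance/shift equivalence giving the forward inclusion and openness plus invariance giving the reverse one. The subtlety you flag --- upgrading a basic neighbourhood $Z(\mu\setminus F)\subseteq U$ of an infinite path to an unobstructed $Z(x_1\cdots x_n)\subseteq U$ by extending $\mu$ along $x$ past the finitely many edges in $F$ --- is precisely the step the paper elides when it simply asserts such an $n$ exists, and your resolution of it is correct.
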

\begin{proof}
Fix $U\in\oe$. Then 
$$\phi (\rho(U))= \phi((H_U, S_U))= U_{{H_U}, S_U}.$$ 
We show that $ U_{{H_U}, S_U}=U$. Starting with the left hand side, fix $x\in  U_{{H_U}, S_U}$. 

Case 1. Suppose $x\in U_{H_U}$. Then, there exists $n\in \N$ such that $r(x_n)\in H_U$. Thus $Z(r(x_n))\subseteq U$ and hence $x\in U$ by Remark \ref{rmk:shiftEquiv}.

Case 2. Suppose $x\in U_{S_U}$. Then $r(x)\in S_U \subseteq U$.  Consequently, $x\in U$ by Remark \ref{rmk:shiftEquiv}.

We have proved $U_{{H_U}, S_U}\subseteq U$. For the reverse inclusion, fix $x\in U$. We consider the three possibilities for $x$.

Case 1. Assume $x\in {\rm Path}(E)$ and $r(x)$ is a sink. By Remark \ref{rmk:shiftEquiv} we have that $r(x) \in U$. Now 
$$\{r(x)\}= Z(r(x))\subseteq U$$
and this implies $r(x)\in H_U$ giving $x\in U_{H_U}$.

Case 2. Suppose $x\in E^\infty$. Since $U$ is open, there exists $n\in \N$ such that 
$$Z(x_1 \dots x_n)\subseteq U.$$
Since each element in $Z(r(x_n))$ is shift equivalent to an element in $Z(x_1 \dots x_n)$ we get    $Z(r(x_n))\subseteq U$ by Remark \ref{rmk:shiftEquiv}. This means $r(x_n)\in H_U$. Then $x\in U_{H_U}$ by Definition \ref{def:USub} (\ref{UH}).

Case 3. Finally, assume $x\in {\rm Path}(E)$ and $r(x)$ is an infinite emitter. 
If $r(x) \in H_U$ then $x\in U_{H_U}$ by Definition \ref{def:USub} (\ref{UH}). Otherwise, suppose $r(x) \notin H_U$. Then $r(x)\in S_U$  by Definition \ref{def:USub} (\ref{SU}). Now $x\in U_{S_U}$ by Definition \ref{def:USub} (\ref{US}).

In all three cases, $x\in U_{{H_U}, S_U}$ and we are done.
 \end{proof}

\begin{lemma}\label{lem:injectivity}
We have
$\rho\circ\phi=1_{\te}$.
\end{lemma}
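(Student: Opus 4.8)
The plan is to fix $(H,S)\in\te$, set $U := \uhs = U_H \sqcup U_S$, and verify directly that $\rho(U) = (H_U, S_U)$ equals $(H,S)$; that is, that $H_U = H$ and $S_U = S$. Since the previous two lemmas already show that $\phi$ and $\rho$ land in the correct lattices, this equality of pairs is precisely the statement $\rho\circ\phi = 1_{\te}$.

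First I would prove $H_U = H$. The inclusion $H \subseteq H_U$ is immediate: if $v \in H$ and $x \in Z(v)$, then $s(x) = v \in H$, so $x \in U_H \subseteq U$; hence $Z(v) \subseteq U$ and $v \in H_U$. For the reverse inclusion I would argue contrapositively: assuming $v \notin H$, I construct an element of $Z(v)$ lying outside $U$, which forces $Z(v) \not\subseteq U$ and hence $v \notin H_U$. Starting at $v$, I build a path that never meets $H$ by repeatedly extending it: at a regular vertex $w \notin H$, saturation of $H$ guarantees an edge $e$ with $s(e) = w$ and $r(e) \notin H$; at an infinite emitter $w \in S \subseteq B_H$, the defining inequality $0 < \vert s^{-1}(w)\cap r^{-1}(E^0\setminus H)\vert$ again produces an edge leaving $H$; and I stop as soon as I reach a sink or an infinite emitter not in $S$. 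If the process stops, the resulting finite path $\alpha \in Z(v)$ ends at a sink or at an infinite emitter outside $S$, so $\alpha \notin U_S$; if it never stops, it yields an infinite path $x \in E^\infty \cap Z(v)$, which lies in no $U_S$. In either case all vertices of the path avoid $H$, so the path is not in $U_H$ either, and therefore not in $U$.

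Next I would prove $S_U = S$, now using $H_U = H$. For $S \subseteq S_U$, given $v \in S$ I view $v$ as the length-$0$ path at $v$; since $r(v) = v \in S$ it lies in $U_S \subseteq U$, and as $v$ is an infinite emitter with $v \notin H = H_U$ it satisfies the defining conditions, so $v \in S_U$. For $S_U \subseteq S$, take $v \in S_U$, realised as $v = r(\alpha)$ for some $\alpha \in U \cap {\rm Path}(E)$ with $v$ an infinite emitter and $v \notin H_U = H$. The key point is that $\alpha \notin U_H$: if some vertex of $\alpha$ lay in $H$, hereditariness would force $r(\alpha) = v \in H$, a contradiction. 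Since $U = U_H \sqcup U_S$ is a disjoint union, $\alpha \in U_S$, i.e.\ $r(\alpha) = v \in S$.

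The main obstacle is the reverse inclusion $H_U \subseteq H$: here I must package the saturation property and the breaking-vertex inequality into a single escape argument producing a path that stays outside $H$, treating uniformly the three termination possibilities (infinite path, sink, non-breaking infinite emitter) that place the resulting element in $\geo \setminus U$. The remaining inclusions are short bookkeeping arguments relying on hereditariness and the disjointness of $U_H$ and $U_S$.
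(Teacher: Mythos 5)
Your proof is correct, and it follows the paper's overall plan: prove $H=H_U$ (the easy inclusion by hereditariness, the hard one by an escape-path construction combining saturation at regular vertices with the $B_H$ inequality at breaking vertices), then deduce $S=S_U$ from $H=H_U$ by bookkeeping. The genuine difference lies in how the hard inclusion $H_U\subseteq H$ is organized. The paper argues by contradiction: assuming $w\in H_U\setminus H$, it invokes the hypothesis $Z(w)\subseteq U_{H,S}$ at every step of the construction (via shift invariance, Remark~\ref{rmk:shiftEquiv}) to rule out ever reaching a sink and to force every infinite emitter encountered to lie in $S$; hence its escape path is necessarily infinite, and lying in $U_{H,S}$ it must lie in $U_H$, a contradiction. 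You instead argue contrapositively from the bare assumption $v\notin H$: your process is allowed to terminate at a sink or at an infinite emitter outside $S$, and the terminating finite path (which belongs to $X$, hence to $Z(v)$) then serves directly as a witness in $Z(v)\setminus U_{H,S}$, the non-terminating case giving an infinite witness exactly as in the paper. This buys a slightly cleaner and marginally stronger statement---$v\notin H$ alone yields $Z(v)\not\subseteq U_{H,S}$, with no invariance bookkeeping along the path---at the cost of an explicit three-way terminal case analysis that the paper's standing hypothesis lets it suppress. Your treatment of $S_U=S$ also just makes explicit the hereditariness argument that the paper compresses into its final display, so that part is the same in substance.
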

\begin{proof}
Fix $(H, S)\in \te$.
Then 
$$\rho (\phi((H, S)))= \rho(U_{H, S})= (H_V, S_V),$$
where $V:=U_{H,S}$.

We show that $ H=H_V$ and $S=S_V$. Fix $v\in H$. To see that $v \in H_V$ it suffices to show $Z(v)\subseteq V$ by Definition \ref{def:USub} (\ref{HU}). Fix $x\in Z(v)$. Then $s(x)=v\in H$ so $x\in U_H$ by Definition \ref{def:USub} (\ref{UH}). Thus $x\in V$.

Next, fix $w\in H_V$. By  Definition \ref{def:USub} (\ref{HU}) we have $Z(w)\subseteq V$. By way of contradiction, assume $w\notin H$. We will show that there exists an infinite path $x$ such that $s(x)=w$ and $r(x_n)\notin H$ for every $n\in \N$. Notice that $w$ is not a sink. For otherwise, $w\in Z(w)\subseteq V$ but $w\notin U_S$ so $w\in U_H$; but then $w\in H$, a contradiction. 
 We next claim that there exists $e_1\in E^1$ such that $s(e_1)=w$ and $r(e_1)\notin H$. 
 To prove the claim we will consider the two possible cases for $w$. First, if $w\in {\rm Reg}(E)$ the existence of 
 $e_1$ comes from $H$ being saturated and $w\notin H$. Second, suppose 
 $w \in  {\rm Inf}(E)$.  Then $w\in V$. Since $w\notin U_H$ we must have $w\in U_S$ and therefore $w\in S$. Now the existence of $e_1$ comes from the 
 definition of $B_H$.
Continuing in this way by replacing $w$ with $r(e_1)$ and noting that 
$Z(r(e_1)) \subseteq V$  by Remark~\ref{rmk:shiftEquiv}, 
we get an infinite path $x=e_1e_2\dots$ such that $r(e_n)\notin H$  for all $n\in \N$. 
 
Combining  $Z(w)\subseteq V$ and $x\in E^\infty\cap Z(w)$ 
gives $x\in U_H$, contradicting the definition of $U_H$.  Therefore $w\in H$ and we have proved $H=H_V$.

To see that $S=S_V$ notice
$$S=\{r(\alpha) \ \vert \ \alpha \in U_S\}.$$
Taking into account $H=H_V$ we have
$$S_V:=\{ r(\alpha) \ \vert \ \alpha \in V\cap {\rm Path}(E), r(\alpha) \in {\rm Inf}(E)\setminus H\}=S.$$
\end{proof}

\begin{lemma}\label{lem:OrderPreserv}
The maps $\phi$ and $\rho$ are both order preserving.
\end{lemma}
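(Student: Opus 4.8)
The plan is to verify the two monotonicity statements separately by unwinding Definition~\ref{def:USub}. Since the order on $\oe$ is simply $\subseteq$ while the order on $\te$ is $(H_1,S_1)\le(H_2,S_2)\iff H_1\subseteq H_2$ and $S_1\subseteq S_2\cup H_2$, each direction reduces to a short containment check. I do not expect to invoke the bijectivity established in Lemmas~\ref{lem:surjectivity} and~\ref{lem:injectivity}: a monotone bijection need not have a monotone inverse, so both $\phi$ and $\rho$ must be treated by hand.

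For $\phi$, I would start from $(H_1,S_1)\le(H_2,S_2)$ and prove $U_{H_1,S_1}\subseteq U_{H_2,S_2}$ by handling the two pieces of $U_{H_1,S_1}=U_{H_1}\sqcup U_{S_1}$ in turn. The inclusion $U_{H_1}\subseteq U_{H_2}$ is immediate from $H_1\subseteq H_2$ together with \eqref{UH}, since $r(x_n)\in H_1$ forces $r(x_n)\in H_2$. For $U_{S_1}$, fix $\alpha\in U_{S_1}$, a finite path with $r(\alpha)\in S_1\subseteq S_2\cup H_2$. If $r(\alpha)\in S_2$ then $\alpha\in U_{S_2}$ by \eqref{US}; if instead $r(\alpha)\in H_2$, then because $r(\alpha)=r(\alpha_{|\alpha|})$ the defining condition \eqref{UH} is met at $n=|\alpha|$, so $\alpha\in U_{H_2}$. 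In either case $\alpha\in U_{H_2,S_2}$, and hence $\phi$ is order preserving.

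For $\rho$, I would start from $U_1\subseteq U_2$ and check the two clauses of $\rho(U_1)\le\rho(U_2)$. The clause $H_{U_1}\subseteq H_{U_2}$ is immediate from \eqref{HU}: if $v\in H_{U_1}$ then $Z(v)\subseteq U_1\subseteq U_2$, so $v\in H_{U_2}$. For the clause $S_{U_1}\subseteq S_{U_2}\cup H_{U_2}$, fix $v\in S_{U_1}$; by \eqref{SU} there is $\alpha\in U_1\cap{\rm Path}(E)$ with $r(\alpha)=v\in{\rm Inf}(E)$ and $v\notin H_{U_1}$. Since $\alpha\in U_2$ as well, I would split on whether $v\in H_{U_2}$: if so, $v$ lands in $H_{U_2}$; if not, then $\alpha\in U_2\cap{\rm Path}(E)$ with $r(\alpha)$ an infinite emitter outside $H_{U_2}$ places $v$ in $S_{U_2}$ by \eqref{SU}. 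Thus $S_{U_1}\subseteq S_{U_2}\cup H_{U_2}$ and $\rho$ is order preserving.

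The only step that is not purely formal, and which I regard as the mild obstacle, is the asymmetry in the $S$-component of the order on $\te$: a breaking vertex recorded in $S_1$ (respectively $S_{U_1}$) may be \emph{absorbed} into the hereditary part $H_2$ (respectively $H_{U_2}$) of the larger element rather than persisting in $S_2$ (respectively $S_{U_2}$). This is exactly what forces the case split in both directions, and on the $\phi$ side it is resolved by the observation that a finite path $\alpha$ with $r(\alpha)\in H$ already lies in $U_H$ through $r(\alpha_{|\alpha|})\in H$. Beyond the elementary description of the basic sets $Z(v)$, no topological input is required.
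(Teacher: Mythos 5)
Your proof is correct and follows essentially the same route as the paper's: componentwise containment checks, with $U_{S_1}\subseteq U_{S_2}\cup U_{H_2}$ handling the absorption of breaking vertices into $H_2$ on the $\phi$ side, and the case split $r(\alpha)\in H_V$ versus $r(\alpha)\in S_V$ on the $\rho$ side. You merely spell out details (such as why a finite path $\alpha$ with $r(\alpha)\in H_2$ lies in $U_{H_2}$) that the paper leaves implicit.
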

\begin{proof}
Consider $(H_1, S_1), (H_2, S_2)\in \te$ such that $(H_1, S_1) \leq(H_2, S_2)$. Notice that by Definition \ref{def:USub}(\ref{UH}) we obtain $U_{H_1}\subseteq U_{H_2}$. S
imilarly, by Definition~\ref{def:USub}(\ref{US}) we get $U_{S_1}\subseteq U_{S_2}\cup U_{H_2}$ and that $\phi$ is order preserving.

To see that $\rho$ is order preserving, assume $U, V\in \oe$ such that $U\subseteq V$. That $H_U\subseteq H_V$ follows immediately from  Definition  \ref{def:USub}(\ref{HU}). 
Now we show $S_U \subseteq S_V \cup H_V$. Take $r(\alpha)\in S_U$. If $r(\alpha)\in H_V$ we are done. Otherwise $r(\alpha)\in S_V$ because $\alpha \in U\subseteq V$.
\end{proof}

\begin{proof}[Proof of Theorem \ref{thm:Lattice}]
The result follows from the lemmas above and \cite[Theorem 8.2]{Jacob} which says that an order preserving
bijection between lattices that has an order preserving
inverse is a lattice isomorphism.
\end{proof}

Next, we present a corollary connecting our map $\phi$ to the lattice structure of graded ideals in the algebras.  
First some notation:  for any algebra $A$, denote by $\lgr(A)$ the lattice of graded ideals of $A$.

For any open invariant $U\in \geo$ we define the map $I$ from $ \oe$ to the lattice of ideals of  $A_K(G_E))$ such that 
\begin{align*}I(U)&:= \{f\in A_K(G_E)\ \vert \ s({\rm \supp} f) \subseteq U\}\\
&=\operatorname{span}\{1_B \ \vert \ B \text{ is a compact open bissection such that } s(B) \subseteq U\}.\end{align*}
Note that $I(U)$ is an ideal of the Steinberg algebra $A_K(G_E)$ by \cite[Lemma 3.2]{CEAaHS}. 
In Corollary~\ref{cor:LatticeIsom} we show that $I$ is a lattice isomorphism onto the graded ideals of $A_K(G_E)$.

Finally, in the Leavitt path algebra, 
for $v\in B{_H}$, define \index{$v^H$} $$v^H:= v-\sum_{e\in s^{-1}(v) \cap r^{-1}(E^0\setminus H) }ee^\ast,$$ and, for any subset $S\subseteq B{_H}$, define
 \index{$S^H$} $S^H=\{v^H \ \vert \ v\in S\}$.
 {For any subset $X$ in an algebra, we denote by $\operatorname{Ideal}(X)$
% We write $\operatorname{Ideal}(H \cup S^H)$ to denote the ideal in $L_K(E)$ generated by $H \cup S^H$.
the ideal generated by $X$.}

\begin{corollary}\label{cor:LatticeIsom}
Consider the following diagram 
$$\xymatrix{ 
\lgr(L_K(E))     \ar[rr]^{\pi} &                                                       & \lgr(A_K(G_E))           &                              \\ 
                                                                          &  &                                                           &   \\
\te \ar[rr]_{\phi}     \ar[uu]^{\varphi'}                    &                           & \oe    \ar[uu]_{I}                                                      &
}
$$
\noindent
where $\phi$ is the map defined in Theorem \ref{thm:Lattice}, $\pi$ is as defined in (\ref{def:pi}) and
 \[\varphi'((H, S))=\operatorname{Ideal}(H\cup S^H).\]  Then the diagram is commutative and all the maps are lattice isomorphisms.
\end{corollary}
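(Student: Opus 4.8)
The plan is to establish that the diagram commutes and that each map is a lattice isomorphism, reducing the work to a single new verification. First I would note that $\phi$ is already known to be a lattice isomorphism by Theorem~\ref{thm:Lattice}, and $\varphi'$ is a lattice isomorphism by \cite[Theorem 2.5.6]{AAS}, which gives the standard bijection between $\te$ and $\lgr(L_K(E))$. The map $\pi$ is a graded $K$-algebra isomorphism from \eqref{def:pi}, hence it restricts to a lattice isomorphism $\lgr(L_K(E)) \to \lgr(A_K(G_E))$ since any graded algebra isomorphism carries graded ideals bijectively to graded ideals, preserving inclusion and therefore $\cap$ and $\vee$. Thus three of the four maps are immediately lattice isomorphisms.

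The core of the argument is commutativity of the square, i.e.\ the identity $\pi \circ \varphi' = I \circ \phi$ on $\te$. Once this is verified, $I$ is forced to be a lattice isomorphism: it equals $\pi \circ \varphi' \circ \phi^{-1}$, a composite of three lattice isomorphisms. So the main obstacle is purely the commutativity check. Fixing $(H,S) \in \te$, I would unwind both sides. On the algebra side, $\varphi'((H,S)) = \operatorname{Ideal}(H \cup S^H)$, so $\pi(\varphi'((H,S)))$ is the graded ideal of $A_K(G_E)$ generated by the images $\pi(v)$ for $v \in H$ and $\pi(v^H)$ for $v \in S$. On the groupoid side, $I(\phi((H,S))) = I(U_{H,S})$ is the span of the $1_B$ for compact open bisections $B$ with $s(B) \subseteq U_{H,S}$.

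The heart of the matching is to compute the relevant generators under $\pi$ and show the two ideals coincide. Using \eqref{def:pi} with $\mu = \nu = v$ and $F = \emptyset$, one gets $\pi(v) = 1_{Z(v)}$, which lies in $I(U_{H,S})$ exactly because $v \in H$ forces $Z(v) \subseteq U_H \subseteq U_{H,S}$ (indeed $s(Z(v)) = Z(v)$ on the unit space). For a breaking vertex $v \in S$, taking $\mu = \nu = v$ and $F = s^{-1}(v) \cap r^{-1}(E^0 \setminus H)$ in \eqref{def:pi} yields $\pi(v^H) = 1_{Z(v \setminus F)}$, and the support condition $s(Z(v \setminus F)) \subseteq U_{H,S}$ holds because the range of every edge emitted by $v$ outside $F$ lands in $H$, so the tail of any $x \in Z(v \setminus F)$ passes through $H$, placing it in $U_H$, while $v$ itself sits in $U_S$. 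Thus $\pi(\varphi'((H,S))) \subseteq I(U_{H,S})$. For the reverse inclusion I would invoke \cite[Lemma 3.2]{CEAaHS} (or the explicit basis description of $I(U)$) to see that $I(U_{H,S})$ is spanned by $1_B$ with $s(B) \subseteq U_{H,S}$, and then argue that each such basic function is generated by the $1_{Z(v)}$ and $1_{Z(v \setminus F)}$ above, exploiting that $U_{H,S} = U_H \sqcup U_S$ is covered by the sets $Z(v)$ ($v \in H$) together with the $Z(v \setminus F)$ ($v \in S$); the hereditary, saturated, and breaking-vertex structure of $(H,S)$ is precisely what guarantees this cover is compatible with the ideal generation. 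The only delicate point is handling arbitrary compact open bisections with source in $U_{H,S}$ and decomposing them against this cover, which is where I expect the real bookkeeping to lie; everything else is formal.
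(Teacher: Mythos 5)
Your overall reduction coincides with the paper's: $\phi$, $\varphi'$ and $\pi$ are lattice isomorphisms for exactly the reasons you give, so everything rests on the single identity $\pi\circ\varphi'=I\circ\phi$, and $I$ is then forced to be a lattice isomorphism. Your verification of the inclusion $(\pi\circ\varphi')((H,S))\subseteq I(U_{H,S})$ — computing $\pi(v)=1_{Z(v)}$ and $\pi(v^H)=1_{Z(v\setminus F_v)}$ with $F_v=s^{-1}(v)\cap r^{-1}(E^0\setminus H)$, then checking supports — also matches the paper. The gap is in the reverse inclusion, and it is not the ``bookkeeping'' you defer: your claimed cover of $U_{H,S}$ by the vertex-indexed sets $Z(v)$ ($v\in H$) and $Z(v\setminus F_v)$ ($v\in S$) is false. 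Every element of $Z(v)$ or $Z(v\setminus F_v)$ has source $v$, so this family misses every $x\in U_H$ whose initial vertex lies outside $H$ (for instance $x=ey$ with $s(e)\notin H\cup S$ and $r(e)\in H$), and it misses \emph{every} path $\alpha\in U_S$ of positive length, since $r(\alpha_1)\in H$ would force $r(\alpha)\in H$ by heredity, contradicting $r(\alpha)\in S$. The uncovered points are precisely those witnessing invariance of $U_{H,S}$, and they cannot be reached from your generators by spanning alone.

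What the paper does instead, and what is missing from your sketch, is to index the cover by \emph{paths} rather than vertices: it uses $\{Z(\alpha)\}_{\alpha\in F_E(H)}\cup\{Z(\alpha\setminus F_{r(\alpha)})\}_{r(\alpha)\in S}$, where $F_E(H)$ consists of the paths that stay outside $H$ until their terminal vertex. (Strictly one should also adjoin $\{Z(v)\}_{v\in H}$ to catch points whose source already lies in $H$; this is a harmless omission in the paper, as those sets are generators themselves.) Ideal membership of each cover element then follows from the groupoid factorization $Z(\alpha)=Z(\alpha,r(\alpha))Z(r(\alpha))$, giving $1_{Z(\alpha)}=1_{Z(\alpha,r(\alpha))}1_{Z(r(\alpha))}$, and likewise $1_{Z(\alpha\setminus F_{r(\alpha)})}=1_{Z(\alpha,r(\alpha))}1_{Z(r(\alpha)\setminus F_{r(\alpha)})}$; multiplying generators by the bisections $1_{Z(\alpha,r(\alpha))}$ is exactly how the ideal property transports them along paths into $H$ or into $S$, which a vertex-only picture cannot do. Finally, for a generator $1_B$ of $I(U_{H,S})$, compactness of $s(B)$ yields a finite disjoint subcover $\mathcal{F}$, the sum $f:=\sum_{C\in\mathcal{F}}1_C$ is the characteristic function of an open set containing $s(B)$, and hence $1_B=1_Bf$ lies in $(\pi\circ\varphi')((H,S))$. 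Without the path-indexed cover and this factorization step, the ``delicate point'' you set aside is in fact the entire content of the reverse inclusion.
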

\begin{proof}
Since $\phi, \varphi'$ and $\pi$ are lattice isomorphisms it 
suffices to show that \[(I\circ\phi)((H, S))= (\pi \circ\varphi')((H, S))\] for all $(H, S)\in \te$.

Fix $(H, S)\in \te$. For $w\in S$ define $F_w:=\{e\in s^{-1}(w) \cap r^{-1}(E^0\setminus H)\}$.
We have $(I\circ\phi)((H, S))= I(U_{H, S})$ and
\begin{align*}
(\pi \circ\varphi')((H, S)) & =  \pi(\operatorname{Ideal}(H\cup S^H))\\
 & =\operatorname{Ideal}\left(\bigcup_{ v \in H}\{1_{Z(v)}  \}\cup\left(\bigcup_{w\in S}\left\{1_{Z(w)}-\sum_{e\in F_w}{1_{Z(e, r(e))}1_{Z(r(e), e))}}\right\}\right)\right) \\
 & =\operatorname{Ideal} \left(\bigcup_{ v \in H}\{1_{Z(v)}  \}\cup\left(\bigcup_{w\in S}\left\{1_{Z(w)}-\sum_{e\in F_w}{1_{Z(e)}}\right\}\right)\right) \\
 & = \operatorname{Ideal}\left(\bigcup_{ v \in H}\{1_{Z(v)}  \}\cup\left(\bigcup_{w\in S}\left\{1_{Z(w\setminus F_w)}\right\}\right)\right). 
\end{align*}

First we show $(\pi \circ\varphi')((H, S)) \subseteq (I\circ\phi)((H, S))$.
Fix an element $v\in H$. Then we have $1_{Z(v)}\in I(U_{H, S})$ because ${\supp}(1_{Z(v)})\subseteq U_H$. Now, fix $w\in S$. 
Then $1_{Z(w\setminus F_w)}\in I(U_{H, S})$ as 
$\supp(1_{Z(w\setminus F_w)})\subseteq U_{H, S}$. Thus $(\pi \circ\varphi')((H, S))\subseteq (I\circ\phi)((H, S))$ as every 
generator of $(\pi \circ\varphi')((H, S))$ belongs to $(I\circ\phi)((H, S))$.

Now we prove $(I\circ\phi)((H, S))\subseteq (\pi \circ\varphi')((H, S))$. 
Fix a generator $1_B \in (I\circ\phi)((H, S))$.  First observe that the collection
\[
\{Z(\alpha)\}_{\alpha\in F_E(H)} \cup \{Z(\alpha\setminus F_{r(\alpha)})\}_{r(\alpha)\in S}\]
forms a disjoint open cover of $U_{H,S}$.  Thus there exist finite subsets $A_H \subseteq F_E(H)$ and 
$A_S \subseteq S$ such that
\[\mathcal{F}:=\{Z(\a)\}_{\a\in A_H} \cup \{Z(\alpha \setminus F_{r(\alpha)})\}_{r(\alpha) \in A_S}\]
is a disjoint finite subcover of the compact set $s(B)$.  
We claim that 
\[f:=\sum_{C \in \mathcal{F}} 1_C\in  (\pi \circ \varphi')((H,S)).\]
First fix $C$ in $A_H$.
There exists $\a\in F_E(H)$ such that 
$$
C= Z(\a) = Z(\a, r(\a))Z(r(\alpha)).
$$
Since 
$$1_C= 1_{Z(\a))}= 1_{Z(\a, r(\a))Z(r(\alpha))} = 1_{Z(\a, r(\a))} 1_{Z(r(\alpha))}$$
we have
 \[1_C \in \  (\pi \circ \varphi')((H,S))\] 
because $1_{Z(r(\alpha))} \in \  (\pi \circ \varphi')((H,S))$.
\medskip
 
Next, fix 
$C \in \{Z(\alpha \setminus F_{r(\alpha)})\}_{r(\alpha) \in A_S}$.  
So there exists
$\alpha$ such that $r(\alpha) \in S$ and 
 \[C= Z(\alpha \setminus F_{r(\alpha}) = Z(\alpha, r(\alpha))Z(r(\alpha)\setminus F_{r(\alpha)}).\] 
Since \[1_{Z(r(\alpha)\setminus F_{r(\alpha)})} \in (\pi \circ \varphi')((H,S))\] we have
\[
 1_C = 1_{Z(\alpha, r(\alpha))Z(r(\alpha)\setminus F_{r(\alpha)})} = 1_{Z(\alpha,
 r(\alpha))}1_{(Z(r(\alpha)\setminus F_{r(\alpha)})} \in
 (\pi \circ \varphi')((H,S)).
\]
Thus $f \in (\pi \circ \varphi')((H,S))$ as claimed.  Now
\[1_B = 1_B f \in (\pi \circ \varphi')((H,S)).\]
\end{proof}

\section{Clopen invariant sets}

In this section we give necessary and sufficient conditions on a pair $(H, S)\in \te$ to ensure that $U_{H, S}$ is a clopen invariant set. Since $U_{H, S}$ is always an open invariant set (Lemma \ref{lem:OpInv}), we characterize when it is also closed.

Let $E$ be an arbitrary graph. For any subset $X\subseteq E^0$ and any vertex $v$, we denote by
${\rm Path}(v, X)$ the set of all finite paths $\a$ such that $s(\a)= v$ and $r(\a)\in X$.

\begin{proposition}\label{prop:closed}
Let $(H, S)\in \te$ and let $U_{H, S}$ be as given in Definition \ref{def:USub}(\ref{UHS}). Then, $U_{H, S}$ is closed if and only if the following two conditions are satisfied:
\begin{enumerate}[\rm (i)]
\item If $\ x\in E^\infty$ is such that  $x^0\subseteq E^0\setminus H,$ then there exists $ N \in \N$ such that
\[\operatorname{Path}(r(x_N),H) = \emptyset.\] \label{eq:closed1}
\item If $v \in E^0 $ is such that $ |\{e\in E^1 \ \vert \ s(e) =v \text{ and } \operatorname{Path}(r(e),H) \neq \emptyset\}| = \infty$,
then $v \in H \cup  S.$ \label{eq:closed2}
\end{enumerate}
\end{proposition}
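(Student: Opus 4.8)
The plan is to use that $\uhs$ is always open (Lemma~\ref{lem:OpInv}), so that closedness is equivalent to the complement $\geo\setminus\uhs$ being open. I would prove the two implications separately, exploiting the trichotomy of $X=\geo$: every point is either an infinite path, a finite path ending at a sink, or a finite path ending at an infinite emitter. Throughout, the guiding observation is that whenever a point $y=\alpha z$ lies in $\uhs$, then either some vertex of $z$ lies in $H$, or $r(z)\in S\subseteq B_H$; and in the latter case, since an infinite emitter in $B_H$ necessarily emits an edge into $H$, in both cases there is a path from $s(z)$ to $H$. This reduces membership questions about $\uhs$ to the existence of paths into $H$, which is exactly what conditions (\ref{eq:closed1}) and (\ref{eq:closed2}) control.

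For the implication (\ref{eq:closed1}) and (\ref{eq:closed2}) $\Rightarrow$ closed, I would fix $x\in\geo\setminus\uhs$ and produce a basic neighbourhood of $x$ disjoint from $\uhs$. If $x$ is a finite path ending at a sink, then $Z(x)=\{x\}$ is already open and disjoint from $\uhs$. If $x\in E^\infty$, then $x\notin U_H$ forces $x^0\subseteq E^0\setminus H$, so (\ref{eq:closed1}) supplies an $N$ with ${\rm Path}(r(x_N),H)=\emptyset$; I would then show $Z(x_1\cdots x_N)$ is disjoint from $\uhs$, since any $y=x_1\cdots x_N z$ in $\uhs$ would yield a path from $r(x_N)$ to $H$ through $z$, contradicting the choice of $N$. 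If $x$ is a finite path ending at an infinite emitter, then the set $F$ of edges $e$ with $s(e)=r(x)$ and ${\rm Path}(r(e),H)\neq\emptyset$ must be finite: otherwise (\ref{eq:closed2}) would force $r(x)\in H\cup S$, contradicting $x\notin\uhs$. I would then check that $Z(x\setminus F)$ is a neighbourhood of $x$ disjoint from $\uhs$, using the same ``path to $H$'' observation applied to the first edge of $z$.

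For the converse I would argue by contraposition, showing that the failure of either condition exhibits a limit point of $\uhs$ lying outside $\uhs$. If (\ref{eq:closed1}) fails, there is $x\in E^\infty$ with $x^0\subseteq E^0\setminus H$ and ${\rm Path}(r(x_n),H)\neq\emptyset$ for every $n$; such $x$ lies in the complement, and given any basic neighbourhood $Z(x_1\cdots x_n\setminus F)$ the next edge $x_{n+1}$ avoids $F$, so appending to $x_1\cdots x_{n+1}$ a path into $H$ and completing it via Remark~\ref{rmk:completion} produces a point of $U_H\subseteq\uhs$ inside the neighbourhood. If (\ref{eq:closed2}) fails, there is an infinite emitter $v\notin H\cup S$ emitting infinitely many edges that reach $H$; then $v\in\geo\setminus\uhs$, and for any $Z(v\setminus F)$ one of these infinitely many edges avoids the finite set $F$, again giving a point of $\uhs$ in the neighbourhood. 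In both cases $\uhs$ is not closed.

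The main obstacle I anticipate is the infinite-emitter case (the third case above, together with the failure of (\ref{eq:closed2})): one must correctly isolate the finite ``obstruction set'' $F$ of edges that can reach $H$, and verify that a point passing through a vertex of $S$ still forces a path into $H$. This is where the structural fact $S\subseteq B_H$---that a breaking vertex always emits at least one edge into $H$---is essential, and where conditions (\ref{eq:closed1}) and (\ref{eq:closed2}) are each tailored to exactly one branch of the trichotomy of $X$.
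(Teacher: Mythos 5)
Your proposal is correct, and it implements the two implications by a route that is genuinely different in method, though identical in substance, to the paper's. The paper argues with sequences in both directions: to show closedness forces (\ref{eq:closed1}) and (\ref{eq:closed2}), it builds explicit sequences in $U_H$ (via Remark~\ref{rmk:completion}) converging to the offending infinite path, respectively to the vertex $v$, and invokes closedness; for the converse it fixes an arbitrary convergent sequence in $\uhs$ and runs on its limit $z$ the same trichotomy (sink, infinite path, infinite emitter) that you run on points of the complement. You instead work with basic open sets throughout: for (\ref{eq:closed1})--(\ref{eq:closed2}) $\Rightarrow$ closed you exhibit, around each point of $\geo\setminus\uhs$, a basic neighbourhood $Z(x)$, $Z(x_1\cdots x_N)$ or $Z(x\setminus F)$ disjoint from $\uhs$, and for the converse you argue by contraposition that the offending point is a closure point of $\uhs$. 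The mathematical core is the same in both treatments: the trichotomy of $X$, condition (\ref{eq:closed1}) handling infinite paths and (\ref{eq:closed2}) handling infinite emitters, and the structural fact that a vertex of $S\subseteq B_H$ emits an edge into $H$, which you correctly isolate as the crux of the infinite-emitter case. What your version buys is robustness: at a finite path ending in an infinite emitter of uncountable out-degree the space $\geo$ is not first countable (and the paper explicitly allows uncountable graphs), so a sequential argument for ``conditions imply closed'' strictly speaking only yields sequential closedness, whereas your neighbourhood argument yields closedness outright; in this respect yours is the more watertight implementation. What the paper's version buys is concreteness: the explicitly constructed convergent sequences make the failure of closedness vivid and are reused verbatim in the examples following the proposition.
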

\begin{proof}
Suppose that $U_{H, S}$ is closed. To see that condition~(\ref{eq:closed1}) holds, 
fix $x\in E^\infty$ such that $x^0\subseteq E^0\setminus H$. 
By way of contradiction, suppose for each $n\in \N$ we have  $\operatorname{Path}(r(x_n), H) \neq \emptyset$. 
Choose $(\alpha)_n \in \operatorname{Path}(r(x_n),H)$ and consider the sequence:
$$\left( \left(x_1\dots x_n(\alpha)_n y_{r((\alpha)_n)}\right)_n\right)\subseteq U_H,$$
where $y_{r((\alpha)_n)}$   is as in Remark \ref{rmk:completion}. Notice that this sequence converges 
to $x\notin U_{H, S}$, which is a contradiction because $U_{H, S}$ is closed.  
Thus, condition (\ref{eq:closed1}) holds.

To see that condition (\ref{eq:closed2}) holds, fix $v \in E^0$ 
such that 
\[|\{e\in E^1 \ \vert \ s(e) =v \text{ and } \operatorname{Path}(r(e),H) \neq \emptyset\}| = \infty.\]
For each $n \in \mathbb{N}$, choose $e_n\in E^1$ such that $s(e_n) = v$,  
$\operatorname{Path}(r(e_n),H) \neq \emptyset$ and $e_n \neq e_m$ for $m \neq n$.
Also choose $(\alpha)_n \in \operatorname{Path}(r(e_n),H)$.  Now the sequence
$(e_n(\alpha)_n y_{r((\alpha)_n)} )\subseteq U_H$,  where $y_{r((\alpha)_n)}$ is as 
in Remark~\ref{rmk:completion}.  
Then $(e_n(\alpha)_n y_{r((\alpha)_n)}) \rightarrow v$; 
since $U_{H,S}$ is closed, we have $v \in U_{H,S}$.
Therefore $v \in H \cup S$.

Conversely, assume that conditions (\ref{eq:closed1}) and (\ref{eq:closed2})  are satisfied.
Fix a sequence $\left((x)_n\right) \subseteq U_{H,S}$ such that $ (x)_n \rightarrow z$. 
We consider the possible cases for $z$.
 
Case 1.  Suppose $z \in \operatorname{F}(E)$ and $r(z)$ is a sink.  Then $\{z\} = Z(z)$ is an open set and hence
$(x)_n = z$ eventually.  Thus $z \in U_{H,S}$.

Case 2. Suppose $z\in E^\infty$. If $r(z_m)\in H$ for some $m\in \N$, then $z\in U_H$ by 
Definition~\ref{def:USub}(\ref{UH}). Otherwise, suppose $z^0\subseteq E^0\setminus H$. 
By condition~(\ref{eq:closed1}) there exists $N\in \N$ such that  $\operatorname{Path}(r(z_N),H) = \emptyset$.
Notice that $z$ is in the open set $Z(z_1 \dots z_N)$, so we have $(x)_n\in Z(z_1 \dots z_N)$ eventually.
However $Z(z_1 \dots z_N) \cap U_H=\emptyset$, thus $(x)_n\in U_S$ eventually. 
This is a contradiction because $(x)_n \in S$ implies  $\operatorname{Path}(r((x)_n), H) \neq \emptyset$.

Case 3.  Suppose $z \in \operatorname{F}(E)$ and $r(z) \in \operatorname{Inf}(E)$. If $r(z)\in H$ then $z\in U_H$ and we are done. Similarly, if $(x)_n = z$ eventually, then $z\in U_{H, S}$.
So, assume $r(z)\notin H$ and $ (x)_n \neq z$ eventually. Define \[F:= \{e\in E^1 \ \vert \ s(e) =r(z) \text{ and } \operatorname{Path}(r(e),H) \neq \emptyset\}.\]
We claim that  $|F|=\infty$.  To prove the claim, by way of contradiction suppose $|F| < \infty.$
Then $Z(z \setminus F)$ is an open set containing $z$ and hence $(x)_n \in Z(z \setminus F)$ eventually.
Note that for every path $\mu \in Z(z \setminus F)$ with $\mu \neq z$ we have $\operatorname{Path}(r(\mu),H) = \emptyset$.
Thus $Z(z \setminus F) \cap U_H = \emptyset$, which implies $(x)_n \in U_S$ eventually.
But then $\operatorname{Path}(r((x)_n),H) \neq \emptyset$, which is a contradiction.  Therefore $|F| = \infty$.
Now condition~(\ref{eq:closed2}) implies that $r(z) \in H \cup  S$ and 
hence $z \in U_{H,S}$ as needed.
\end{proof}

\begin{remark}\label{rmk:TodoBH}
\rm
Note that for any $(H,S)\in \mathcal{T}_E$  we must have $B_H\subseteq\overline{U_{H,S}}$, where $\overline{(\cdot)}$ denotes the topological closure. Indeed, 
 fix $v\in B_H$. We show that any basic neighbourhood of $v$ has nonempty
intersection with $U_{H,S}$.
Since $v\in \operatorname{Inf}(E)$  it belongs to $G_E^{(0)}$. Consider 
any basic neighbourhood $Z$ of $v$. Then  $v\in G_E^{(0)}$ implies that $Z$ is of the form
$Z(v\setminus F)$. Since $v \in B_H$, there exists an edge $e$ such that $s(e)=v$ and $r(e)\in H$.
Extending $e$ to an element $e y_{r(e)}\in G_E^{(0)}$  we can ensure that $e y_{r(e)}\in Z\cap U_{H,S}$. Thus, 
$Z\cap U_{H,S}\ne\emptyset$ and hence $v\in\overline{U_{H,S}}$.
In particular if $U_{H,S}$ is closed, then $B_H\subseteq U_{H,S}$ and so $B_H=S$.

Notice that condition (\ref{eq:closed2}) of Proposition \ref{prop:closed} implies that $S$ must coincide with $B_H$. 
Thus, if $U_{H, S}$ is closed, then $S=B_H$, which is consistent.
The reverse implication is not true as the first example that follows will show.
\end{remark}

Next, we give some examples to illustrate conditions (\ref{eq:closed1}) and  (\ref{eq:closed2}) in Proposition \ref{prop:closed}.

\begin{examples}
\rm
Consider the graph that follows.
\[\xygraph{
!{<0cm,0cm>;<1.5cm,0cm>:<0cm,1.2cm>::}
!{(0,0) }*+{{\bullet}^{v_1}}="a"
!{(1,0) }*+{{\bullet}^{v_2}}="b"
!{(2,0) }*+{{\bullet}^{v_3}}="c"
!{(3,0)}*+{\cdots}="d"
!{(1.5,.8)}*+{ }
%!{(1.5,0)}*+\xycircle<90pt,15pt>{}
!{(.7,-1.2) }*+{\bullet}="e"
!{(1.3,-1.2) }*+{\bullet}="f"
!{(1.9,-1.2) }*+{\bullet}="g"
!{(1.5,-1.2)}*+\xycircle<70pt,15pt>{}
!{(1.5, -2)}*+{H}
%%%%%%%%%%%%%%%%%
"a":^{e_1}"b":^{e_2}"c":@{..>}"d"
"a":^{f_1}"e" "b":^{f_2}"f" "c":^{f_3}"g"
}\]\vskip 1cm
Then $U_{H, \emptyset}$ is not closed because the sequence $(f_1, e_1f_2, e_1e_2f_3, \dots)$ is contained in 
$U_{H, \emptyset}$ but it converges to $e_1e_2e_3\dots$ which is not in $U_{H, \emptyset}$.  On the other hand, 
condition~\eqref{eq:closed1} fails for the infinite path $e_1e_2e_3 \dots$. 
\medskip

Now, consider the following graph.

\hskip 6cm 
\xygraph{
!{<0cm,0cm>;<1.5cm,0cm>:<0cm,1.2cm>::}
!{(0,0) }*+{\bullet_{u}}="a"
!{(1.5,0) }*+{\bullet_{v}}="b"
%!{(1,-1) }*+{\bullet_{v}}="c"
!{(0,1.5)}*+{}
"a":^{f}"b" 
%"a":_{f}"c"
"a" :@`{"a"+(0,1.5),"a"+(-1.5,-0.3)}_e "a"
}  
\vskip .5cm
Note that, in this case, for $H=\{v\}$ the open set $U_{H, \emptyset}$ contains the sequence $(e^{n}f)$ 
that converges to $ee\dots \notin U_{H, \emptyset}$. So again, $U_{H, \emptyset}$ is not closed.
Here, condition~\eqref{eq:closed1} fails for the infinite path $eee\dots$.
%\vskip -1cm

{Finally, for the graph}

\hskip 7cm 
\xygraph{
!{<0cm,0cm>;<1.5cm,0cm>:<0cm,1.2cm>::}
!{(0,0) }*+{\bullet_{u}}="a"
!{(1.5,0) }*+{\bullet_{v}}="b"
%!{(1,-1) }*+{\bullet_{v}}="c"
!{(0,1.5)}*+{}
"a":^{f_n}_{(\infty)}"b" 
%"a":_{f}"c"
%"a" :@`{"a"+(0,1.5),"a"+(-1.5,-0.3)}_e "a"
}  
\vskip .5cm

\noindent
again we have that the open set $U_{H, \emptyset}$ is not closed, 
where $H=\{v\}$, because the sequence $(f_n)$ converges to $u$, which is not an element of $U_{H, \emptyset}$.
This time, condition~\eqref{eq:closed2} fails for the vertex $u$.
\end{examples}
\bigskip

\section{Decomposability}

Let $K$ be a field and $E$ an arbitrary graph. We say that the Leavitt path algebra $L_K(E)$ is \emph{decomposable} 
if there exists two nonzero ideals $I$ and $J$ in $L_K(E)$ such that $L_K(E)=I \oplus J$.

The equivalence between (\ref{it1:DecLPA})  and  (\ref{it2:DecLPA}) of the following lemma is contained in the proof of \cite[Proposition 4.6]{AN} but is not stated explicitly. We provide a different proof.

\begin{lemma}\label{lem.equivDec}
Let $K$ be a field and $E$ an arbitrary graph. Then the following are equivalent:
\begin{enumerate}[\rm (i)]
\item\label{it1:DecLPA} $L_K(E)$ is decomposable.
\item\label{it2:DecLPA} There exist nonzero graded ideals $I$ and $J$ such that $L_K(E) = I \oplus J$.
\item\label{it3:DecLPA} There exists $(H_1, S_1), (H_2, S_2)\in \te$ such that $L_K(E) = I((H_1, S_1)) \oplus I((H_2, S_2))$.\end{enumerate}
\end{lemma}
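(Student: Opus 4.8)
The strategy is to prove the cycle of implications $(\ref{it2:DecLPA}) \Rightarrow (\ref{it1:DecLPA})$, $(\ref{it3:DecLPA}) \Rightarrow (\ref{it2:DecLPA})$, and the harder directions $(\ref{it1:DecLPA}) \Rightarrow (\ref{it2:DecLPA}) \Rightarrow (\ref{it3:DecLPA})$. The implication $(\ref{it2:DecLPA}) \Rightarrow (\ref{it1:DecLPA})$ is immediate since graded ideals are in particular ideals. The implication $(\ref{it3:DecLPA}) \Rightarrow (\ref{it2:DecLPA})$ follows at once from Corollary~\ref{cor:LatticeIsom}: the map $I$ (equivalently $\pi\circ\varphi'$) carries elements of $\te$ to graded ideals of $L_K(E)$, so any decomposition of the form in (\ref{it3:DecLPA}) exhibits $L_K(E)$ as a direct sum of two graded ideals.

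The main content is $(\ref{it1:DecLPA}) \Rightarrow (\ref{it2:DecLPA})$, namely upgrading an arbitrary ideal decomposition to a graded one. First I would observe that if $L_K(E) = I \oplus J$ with $I, J$ nonzero ideals, then the identity $1 = \varepsilon_I + \varepsilon_J$ decomposes (in the multiplier/local-unit sense appropriate to the possibly non-unital setting) into orthogonal central idempotents, so $I$ and $J$ are in fact \emph{two-sided} ideals each containing a separating central idempotent. The key point is that a direct summand ideal is generated by a central idempotent $\varepsilon_I$, and I would show that $\varepsilon_I$ must be homogeneous of degree $0$. The cleanest route is to pass through the Steinberg algebra picture via the graded isomorphism $\pi$ of (\ref{def:pi}): a central idempotent in $A_K(G_E)$ is a locally constant function whose support is a compact open invariant subset of the unit space $\geo$ that is also closed (since its complement is the support of $\varepsilon_J$). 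Centrality forces the support to lie in $\geo$ and to be invariant, hence the idempotent is supported on the degree-zero part, making $I = \varepsilon_I A_K(G_E)$ graded. Transporting back through $\pi^{-1}$ gives graded ideals $I, J$ of $L_K(E)$.

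Finally, $(\ref{it2:DecLPA}) \Rightarrow (\ref{it3:DecLPA})$ is obtained by applying the inverse of the lattice isomorphism $\pi \circ \varphi' = I \circ \phi$ from Corollary~\ref{cor:LatticeIsom}: given the graded decomposition $L_K(E) = I \oplus J$, both $I$ and $J$ are graded ideals, hence equal to $I((H_1,S_1))$ and $I((H_2,S_2))$ for unique pairs $(H_i,S_i) \in \te$, yielding the statement. The main obstacle I anticipate is the argument that a central idempotent yielding a direct summand must be degree-zero (equivalently, that its support is a \emph{clopen} invariant subset of the unit space); this is precisely where the topological decomposition of $\geo$ enters, and where the clopen characterisation of Proposition~\ref{prop:closed} becomes relevant. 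I would take care, in the non-unital case where $E^0$ is infinite, to phrase the idempotent argument in terms of approximate/local units and the fact that $\{1_{Z(v)} : v \in E^0\}$ furnishes a set of local units for $A_K(G_E)$, so that the central-idempotent extraction is legitimate within each finitely-generated corner.
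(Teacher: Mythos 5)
Your handling of the easy implications is fine: (\ref{it2:DecLPA}) $\Rightarrow$ (\ref{it1:DecLPA}) is trivial, and the equivalence of (\ref{it2:DecLPA}) and (\ref{it3:DecLPA}) does follow from the lattice isomorphism $\varphi'$ of Corollary~\ref{cor:LatticeIsom} (the paper instead cites \cite[Theorem 2.5.5]{AAS}, which is the same content). The gaps are in your proof of (\ref{it1:DecLPA}) $\Rightarrow$ (\ref{it2:DecLPA}). First, $L_K(E)$ is unital only when $E^0$ is finite, so ``the identity $1=\varepsilon_I+\varepsilon_J$'' does not exist in general, and a direct summand ideal of a non-unital ring need not be generated by a central idempotent: for $E$ an infinite set of isolated vertices, $L_K(E)=\bigoplus_n K$, and the summand consisting of the even-indexed coordinates contains no single generating idempotent. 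Your suggested remedy (corners $p_FL_K(E)p_F$ with $p_F=\sum_{v\in F}v$, $F\subseteq E^0$ finite) can be made to work, but it is a substantive argument you have not carried out, and the idempotents it produces are central only in the corners, not in $L_K(E)$.

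Second, and this is the crux: the assertion that ``centrality forces the support to lie in $\geo$'' is unproven, and as a general principle about ample groupoids it is false. If $f\in A_K(G)$ commutes with the diagonal subalgebra $\operatorname{span}\{1_U : U\subseteq G^{(0)} \text{ compact open}\}$, the convolution formula only gives $1_U(r(\gamma))f(\gamma)=f(\gamma)1_U(s(\gamma))$ for all $U$, i.e.\ $\supp f$ lies in the isotropy bundle $\{\gamma : s(\gamma)=r(\gamma)\}$, not in the unit space; a group algebra shows this is sharp, since e.g.\ $\tfrac12(1+g)$ is a nontrivial central idempotent of $\mathbb{C}[\mathbb{Z}/2\mathbb{Z}]$ supported off the unit. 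For the graph groupoid $G_E$ the claim you need is true, but proving it is exactly the hard part: off the unit space, an open subset of the isotropy bundle consists of isolated points sitting over cycles without exits, and at such a point $x$ one must cut down by $1_{\{x\}}$ and use that the corner is isomorphic to $K[t,t^{-1}]$, which has no nontrivial idempotents. None of this appears in your proposal; the single sentence ``centrality forces the support to lie in $\geo$'' is carrying the entire proof.

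By contrast, the paper's own proof of (\ref{it1:DecLPA}) $\Rightarrow$ (\ref{it2:DecLPA}) bypasses all topology: since $L_K(E)$ has local units, $L_K(E)=L_K(E)^2=I^2\oplus J^2$, and comparing with $L_K(E)=I\oplus J$ gives $I=I^2$ and $J=J^2$; then \cite[Corollary 2.9.11]{AAS} says an idempotent ideal of a Leavitt path algebra is automatically graded. So even if you completed your route, you would essentially be re-deriving part of the description of the center of $L_K(E)$ to obtain a fact available in two lines by purely ring-theoretic means.
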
 
\begin{proof}
(\ref{it2:DecLPA}) implies (\ref{it1:DecLPA}) is trivial. Now assume (\ref{it1:DecLPA}). Suppose $L_K(E)=I \oplus J$, where $I$ and $J$ are not trivial ideals of $L_K(E)$. 
Since $L_K(E^2)= L_K(E)$, then $L_K(E)=I^2 \oplus J^2$ giving $I=I^2$ and $J=J^2$. By \cite[Corollary 2.9.11]{AAS}, 
this is equivalent to saying that the ideals $I$ and $J$ are graded, which implies (\ref{it2:DecLPA}).

Finally (\ref{it2:DecLPA}) and (\ref{it3:DecLPA}) are equivalent by \cite[Theorem 2.5.5]{AAS}.
\end{proof}

\medskip

\begin{theorem} 
 \label{thm:decomp}
 Let $K$ be a field and $E$ be an arbitrary graph.  The following are equivalent:
\begin{enumerate}[\rm (i)]
  \item \label{it1:decomp} $L_K(E)$ is decomposable.
  \item \label{it2:decomp} There exists nonempty clopen invariant subsets $U$ and $V$ in $\geo$ such that 
$A_K(G_E)= I(U) \oplus I(V)$.
  \item \label{it3:decomp}

There exists a nonempty, proper, hereditary and saturated subset $H$ of $E^0$ such that the following conditions are satisfied:
\begin{enumerate}[\rm (a)]
\item If $\ x\in E^\infty$ is such that  $x^0\subseteq E^0\setminus H,$ then there exists $ N \in \N$ such that
\[\operatorname{Path}(r(x_N),H) = \emptyset.\]
\item If $v \in E^0 $ is such that $ |\{e\in E^1 \ \vert \ s(e) =v \text{ and } \operatorname{Path}(r(e),H) \neq \emptyset\}| = \infty$,
then $v \in H \cup  B_H.$
\end{enumerate}

\item \label{it4:decomp} There exists a nonempty clopen invariant subset $U \subsetneq \geo$. 
 \end{enumerate}
\end{theorem}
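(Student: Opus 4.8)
The plan is to transport the statement across the three lattice isomorphisms supplied by Corollary~\ref{cor:LatticeIsom} and Theorem~\ref{thm:Lattice}, together with the elementary observation that writing an algebra $A$ as a direct sum $A = I \oplus J$ of two nonzero ideals is the same as exhibiting a complementary pair $\{I, J\}$ of nonzero elements in its lattice of ideals (that is, $I \cap J$ is the bottom and $I + J$ is the top). First I would record this observation in each lattice I need: in $\lgr(A_K(G_E))$ the meet, join, bottom and top are $\cap$, $+$, $0$ and $A_K(G_E)$; in $\oe$ they are $\cap$, $\cup$, $\emptyset$ and $\geo$, so that a complementary pair in $\oe$ is precisely $\{U, \geo \setminus U\}$ for an invariant $U$ which is then automatically clopen (openness of $\geo \setminus U$ forces $U$ closed and conversely), such a pair having both members nonempty exactly when $\emptyset \neq U \subsetneq \geo$.

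For (\ref{it1:decomp}) $\Leftrightarrow$ (\ref{it2:decomp}): by Lemma~\ref{lem.equivDec}, (\ref{it1:decomp}) holds iff $L_K(E)$ is a direct sum of two nonzero graded ideals, i.e. iff $\lgr(L_K(E))$ carries a complementary pair of nonzero elements. Since $\pi$ and $I$ are lattice isomorphisms, such a pair transports through $\pi$ and back through $I$ to a complementary pair $\{U, V\}$ of nonempty elements of $\oe$; writing the corresponding graded ideals as $I(U), I(V)$ and using that $I$ preserves $\cap$, $+$, the bottom and the top gives $A_K(G_E) = I(U) \oplus I(V)$ with $U, V$ nonempty clopen invariant. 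This is exactly (\ref{it2:decomp}), and every step reverses. For (\ref{it2:decomp}) $\Leftrightarrow$ (\ref{it4:decomp}): given (\ref{it2:decomp}), $U$ is nonempty, clopen, invariant and proper (proper because $V \neq \emptyset$); conversely, given $\emptyset \neq U \subsetneq \geo$ clopen invariant, put $V := \geo \setminus U$, which is nonempty, open, invariant and complementary to $U$, whence $A_K(G_E) = I(U) \oplus I(V)$.

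For (\ref{it4:decomp}) $\Leftrightarrow$ (\ref{it3:decomp}): this is where Proposition~\ref{prop:closed} and Remark~\ref{rmk:TodoBH} do the work. Assuming (\ref{it4:decomp}), set $(H, S) := \rho(U)$, so that $U = U_{H,S}$ by Theorem~\ref{thm:Lattice}. As $U_{H,S}$ is always open (Lemma~\ref{lem:OpInv}) and now also closed, Proposition~\ref{prop:closed} yields its two conditions, and Remark~\ref{rmk:TodoBH} forces $S = B_H$, turning them into conditions (a) and (b) of (\ref{it3:decomp}); moreover $H \neq \emptyset$ and $H \neq E^0$, since $B_\emptyset = B_{E^0} = \emptyset$ would otherwise give $U = U_{\emptyset,\emptyset} = \emptyset$ or $U = U_{E^0,\emptyset} = \geo$. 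Conversely, given $H$ as in (\ref{it3:decomp}), take $S := B_H$, so $(H, B_H) \in \te$; then $U_{H,B_H}$ is open by Lemma~\ref{lem:OpInv}, and since (a),(b) are exactly the hypotheses of Proposition~\ref{prop:closed} with $S = B_H$ it is also closed, hence clopen and invariant. It is nonempty because $H \neq \emptyset$, and proper because $\rho(U_{H,B_H}) = (H, B_H) \neq (E^0, \emptyset)$ forces $U_{H,B_H} \neq \geo$; this gives (\ref{it4:decomp}).

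The main obstacle is organizational rather than computational, since the substantive analytic content is already isolated in Proposition~\ref{prop:closed}, which I invoke as a black box. The most delicate point is the passage (\ref{it4:decomp}) $\Leftrightarrow$ (\ref{it3:decomp}): I must use Remark~\ref{rmk:TodoBH} to eliminate the auxiliary set $S$ in favour of $B_H$ so that the clopen criterion becomes a condition on $H$ alone, and then check that the nonempty/proper requirements on the clopen set match exactly the requirements that $H$ be nonempty and proper. Keeping straight which lattice operation is the meet and which is the join under each of the isomorphisms $\pi$, $I$, $\phi$ and $\rho$ is the other place where care is needed.
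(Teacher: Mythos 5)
Your proposal is correct, and its skeleton is the same as the paper's: (\ref{it1:decomp})$\Leftrightarrow$(\ref{it2:decomp}) via Lemma~\ref{lem.equivDec} together with the lattice isomorphisms of Corollary~\ref{cor:LatticeIsom}, and (\ref{it3:decomp})$\Leftrightarrow$(\ref{it4:decomp}) via Theorem~\ref{thm:Lattice} and Proposition~\ref{prop:closed}, with $S$ eliminated in favour of $B_H$ (as you note, Remark~\ref{rmk:TodoBH} forces $S=B_H$ for closed $U_{H,S}$; in fact for the direction (\ref{it4:decomp})$\Rightarrow$(\ref{it3:decomp}) the inclusion $S\subseteq B_H$ already suffices to pass from $v\in H\cup S$ to $v\in H\cup B_H$). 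The one place you genuinely diverge is (\ref{it4:decomp})$\Rightarrow$(\ref{it2:decomp}): the paper proves this concretely, splitting an arbitrary $f\in A_K(G_E)$ as $f=f_1+f_2$ with $f_1(\gamma)=f(\gamma)$ when $s(\gamma)\in U$ and $f_2$ supported over $\geo\setminus U$, where clopenness of $U$ is exactly what guarantees $f_1,f_2$ are again locally constant and compactly supported; you instead obtain $A_K(G_E)=I(U)\oplus I(\geo\setminus U)$ abstractly, transporting the complementary pair $\{U,\geo\setminus U\}$ of $\oe$ through the lattice isomorphism $I$, so that $I(U)\cap I(\geo\setminus U)=I(\emptyset)=0$ and $I(U)+I(\geo\setminus U)=I(\geo)=A_K(G_E)$. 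Both are valid: your route is more uniform (every implication becomes lattice transport, and the only analytic input left is Proposition~\ref{prop:closed}), while the paper's explicit splitting is self-contained for that step and displays where clopenness enters analytically, without invoking the full strength of Corollary~\ref{cor:LatticeIsom} there. A further point in your favour: you make explicit the nonempty/proper bookkeeping (using $B_\emptyset=B_{E^0}=\emptyset$, hence $U_{\emptyset,\emptyset}=\emptyset$ and $U_{E^0,\emptyset}=\geo$, and injectivity of $\rho$), which the paper's proof leaves implicit; these checks are correct, the only micro-gap being the unstated (but immediate, via Remark~\ref{rmk:completion}) verification that $H\neq\emptyset$ gives $U_{H,B_H}\neq\emptyset$.
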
 
\begin{proof}
The equivalence between (\ref{it1:decomp})  and (\ref{it2:decomp}) follows from Lemma \ref{lem.equivDec} and Corollary \ref{cor:LatticeIsom}.
To see that (\ref{it3:decomp}) and (\ref{it4:decomp}) are equivalent, notice that 
if $H$ is as in (\ref{it3:decomp}), then $U_{H,B_H}$ is clopen by Proposition~\ref{prop:closed} giving (\ref{it4:decomp}). Conversely,
if $U$ is as in \eqref{it4:decomp}, then there exists $(H,S) \in \te$ such that $U=U_{H,S}$ by Theorem~\ref{thm:Lattice}.
Since $U$ is closed, $H$ satisfies the conditions of \eqref{it3:DecLPA} by Proposition~\ref{prop:closed}.

It is easy to see that (\ref{it2:decomp})  implies (\ref{it4:decomp}). Lastly,  we show (\ref{it4:decomp})  implies (\ref{it2:decomp}). For $U$ as in  (\ref{it4:decomp}) we claim that we have a decomposition 
 $$A_K(G_E)= I(U) \oplus I(\geo\setminus U).$$
 To see this, fix $f \in A_K(G_E)$ and define
$$f_1(\gamma)=\begin{cases}
f(\gamma) & \text{if}\ s(\gamma) \in U \\
0 & \text{otherwise} 
\end{cases}
\quad
\text{and}
\quad
f_2(\gamma)=\begin{cases}
f(\gamma) & \text{if}\ s(\gamma) \in \geo\setminus U \\
0 & \text{otherwise}. 
\end{cases}
$$ 
 Since $U$ and $\geo\setminus U$ are clopen, $f_1, f_2\in A_K(G_E)$. Observing that $f=f_1 + f_2$ gives the result.
\end{proof}

\section{Compatible Paths}

We finish by comparing our results to those in \cite{AN}. 
We point out that the statement of \cite[Theorem 4.2]{AN} is not correct. Consider the following example.

\bigskip

\begin{example}\label{exm:error}
\rm
Let $E$ be the following graph:
\bigskip

\vbox{
\hskip 5cm
\xygraph{
!{<0cm,0cm>;<1.5cm,0cm>:<0cm,1.2cm>::}
!{(0,0) }*+{\bullet}="a"
!{(0,-0.3) }*+{_{w_1}}
!{(1,0) }*+{\bullet}="b"
!{(1,-0.3) }*+{_{w_2}}
!{(2,0) }*+{\bullet}="c"
!{(2,-0.3) }*+{_{w_3}}
!{(3,0) }*+{}="d"
%%%%%%%%
!{(0,1.5) }*+{\bullet}="e"
!{(0,1.8) }*+{_{u_1}}
!{(1,1.5) }*+{\bullet}="f"
!{(1,1.8) }*+{_{u_2}}
!{(2,1.5) }*+{\bullet}="g"
!{(2,1.8) }*+{_{u_3}}
!{(3,1.5) }*+{}="h"
%%%%%%%
!{(-0.5,0.75)}*+{\bullet}="i"
!{(-0.8,0.75) }*+{^{v_1}}
!{(0.5,0.75)}*+{\bullet}="j"
!{(0.2,0.75) }*+{^{v_2}}
!{(1.5,.75)}*+{\bullet}="k"
!{(1.2,0.75) }*+{^{v_3}}
!{(2.5,0.75)}*+{\cdots}
%%%%%%%
!{(-0.8,1.9)}*+{{E}}
%%%%%%%%%%%%%%%
"a":"b" 
"b":"c"
"c":@{..>}"d"
"e":"f"
"f":"g"
"g":@{..>}"h"
%%%%%%%%
"i":"a"
"i":"e"
"j":"b"
"j":"f"
"k":"c"
"k":"g"
} 
}
\end{example}
\bigskip

Let $X=\{u_n \ \vert \ n \in \N\}$ and $Y=\{w_n \ \vert \ n \in \N\}$. Then the only hereditary and saturated sets are
$X, Y, \emptyset$ and $E^0$. Note that $L_K(E)$ is decomposable as $L_K(E)=I(X)\oplus I(Y)$
 but $X$ and $Y$ do not satisfy the conditions in   \cite[Theorem 4.2]{AN} because there are infinitely many paths starting outside of $X\cup Y$ and ending in $X\cup Y$.
 
\medskip

Theorem \ref{thm:nuevaprueba} below comes from combining the statements of 
\cite[Proposition 4.5]{AN} and  its converse, \cite[Proposition 4.6]{AN}. Here we give an alternative proof by showing the equivalence between the graph condition in 
Theorem \ref{thm:decomp} and the graph condition in   \cite[Proposition 4.5]{AN} 
(that also appears in \cite{Hong}).

First we fix some definitions.

\begin{definition}\rm{(\cite[Definition 4.1]{AN})}
Let $E$ be an arbitrary graph and $H$ a hereditary and saturated set of vertices. A path $\lambda=\lambda_1 \dots \lambda_{\vert \lambda \vert}$, where $\lambda_i\in E^1$, is said to be \emph{$H$-compatible} if $r(\lambda)\in H$ and $s(\lambda_{\vert \lambda \vert})\notin H\cup B_H$.
\end{definition}

\begin{definitions}
\rm
Let $H$ be a hereditary and saturated subset of vertices in a graph $E$ and let $v\in E^0\setminus H$. We define the following sets:
\[
C_{v, H}:= \{\lambda \in {\rm Path}(E) \ \vert \ s(\lambda) = v \ \text{ and } \ \lambda   \text{ is $H$-compatible}\}.
\]
We say that $v$ satisfies \emph{Property} $({\rm P})$ if $\vert C_{v, H} \vert = \infty$.
\end{definitions}

\begin{lemma}\label{lem:compat}
Let $K$ be a field and $E$ an arbitrary graph. Assume that $L_K(E)$ is decomposable and let 
$H$ be a hereditary and saturated subset satisfying conditions (a) and (b) in Theorem \ref{thm:decomp} \eqref{it3:decomp}. If $v\in E^0\setminus H$ 
satisfies Property $({\rm P})$, then there exists $e\in s^{-1}(v)$ such that $r(e)\in E^0\setminus H$ and $r(e)$ satisfies Property $({\rm P})$.
\end{lemma}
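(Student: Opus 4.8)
The plan is to understand $C_{v,H}$ by peeling off the first edge of each $H$-compatible path and reducing to the sets $C_{r(e),H}$ at the neighbours of $v$, and then to use condition (b) together with the finiteness built into the definition of $B_H$ to rule out the troublesome ``infinitely many neighbours'' scenario. First I would record the fundamental decomposition. Every $\lambda\in C_{v,H}$ has length at least one and starts with some edge $e\in s^{-1}(v)$. If $|\lambda|=1$ then $\lambda=e$ with $r(e)\in H$, and $H$-compatibility forces $v=s(e)\notin H\cup B_H$; since $v\notin H$ this can occur only when $v\notin B_H$. If $|\lambda|\ge 2$, write $\lambda=e\lambda'$ with $\lambda'=\lambda_2\cdots\lambda_{|\lambda|}$. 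Because $H$ is hereditary one checks that $r(e)\notin H$ (otherwise $s(\lambda_{|\lambda|})\in H$, contradicting compatibility), and that $\lambda\mapsto\lambda'$ is a bijection between the length-$\ge 2$ members of $C_{v,H}$ with first edge $e$ and the set $C_{r(e),H}$. This gives the counting formula
\[
|C_{v,H}|=[v\notin H\cup B_H]\cdot\bigl|\{e\in s^{-1}(v)\ \vert\ r(e)\in H\}\bigr|+\sum_{\substack{e\in s^{-1}(v)\\ r(e)\notin H}}|C_{r(e),H}|,
\]
where the first summand is present only when $v\notin H\cup B_H$.

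Next I would argue by contradiction: suppose the conclusion fails, that is, every $e\in s^{-1}(v)$ with $r(e)\in E^0\setminus H$ satisfies $|C_{r(e),H}|<\infty$. I will show this forces $|C_{v,H}|<\infty$, contradicting Property $({\rm P})$ of $v$. The key observation is that every first edge actually contributing to the formula lies in the set $P:=\{e\in s^{-1}(v)\ \vert\ \operatorname{Path}(r(e),H)\neq\emptyset\}$: an edge into $H$ has $r(e)\in\operatorname{Path}(r(e),H)$, while an edge $e$ with $r(e)\notin H$ contributing to the sum has $C_{r(e),H}\neq\emptyset$, hence $\operatorname{Path}(r(e),H)\neq\emptyset$. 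I then split on whether $P$ is finite. If $P$ is finite, there are only finitely many contributing first edges, each contributing finitely many paths (a length-$1$ edge contributes at most one path, and each edge in the sum contributes $|C_{r(e),H)}|<\infty$ by the contradiction hypothesis), so $|C_{v,H}|<\infty$.

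The main obstacle, and the only place the graph hypotheses enter, is the case $|P|=\infty$. Here condition (b) applies and yields $v\in H\cup B_H$; since $v\notin H$ we get $v\in B_H$. This settles both terms simultaneously: membership in $B_H$ makes the indicator $[v\notin H\cup B_H]$ vanish, eliminating the length-$1$ contribution, while the defining finiteness $|s^{-1}(v)\cap r^{-1}(E^0\setminus H)|<\infty$ of $B_H$ forces only finitely many edges to leave $v$ with range outside $H$, so the sum reduces to finitely many finite terms. Thus $|C_{v,H}|<\infty$ again, the desired contradiction. Hence some $e\in s^{-1}(v)$ has $r(e)\in E^0\setminus H$ with $|C_{r(e),H}|=\infty$, i.e.\ $r(e)$ satisfies Property $({\rm P})$. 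I note that condition (a) is not required for this particular step; only condition (b) and the finiteness property of $B_H$ are used.
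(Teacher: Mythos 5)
Your proof is correct and is essentially the paper's own argument: both proofs peel off the first edge of each $H$-compatible path, invoke condition (b) together with the finiteness clause in the definition of $B_H$ to control the set of contributing first edges, and finish by a counting/pigeonhole step. The only differences are organisational: you argue contrapositively via an explicit counting formula and split cases on the finiteness of $\{e\in s^{-1}(v)\mid \operatorname{Path}(r(e),H)\neq\emptyset\}$ rather than on the set of edges into $H$, which incidentally lets you bypass the paper's preliminary, saturation-based step showing that some edge leaving $v$ has range outside $H$.
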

\begin{proof}
Decompose $s^{-1}(v)= A \sqcup B$, where $ A:  = \{ e\in s^{-1}(v) \ \vert \ r(e) \in H\}$. We claim that $B\neq \emptyset$. By way of contradiction, assume $B=\emptyset$. We distinguish two cases. If $\vert A \vert = \infty$ then, by condition (b) we have $v\in H\cup B_H$. Since we are assuming $v\notin H$ then $v\in B_H$. But this is a contradiction because $B=\emptyset$. Assume $\vert A \vert \neq \infty$. Note that $\vert A \vert \neq 0$ because $v$ is not a sink. Since $H$ is a saturated set we have $v\in H$, a contradiction. Therefore $B\neq \emptyset$.

Case 1: the set $A$ is infinite. By condition (b) and since $v\notin H$ we have $v\in B_H$. Note that $0\neq \vert B\vert \lneq \infty$. Since $\vert C_{v, H}\vert = \infty$ and for every $e_1\dots e_n\in C_{v, H}$ the edge $e_1$ must be in $B$, then there exists $f\in B$ such that $f\gamma\in C_{v, H}$, for some $\gamma\in {\rm Path}(E)$ and $r(f)$ satisfies Condition (P). In this case we are done.

Case 2: the set $A$ is finite. We are assuming that $\vert C_{v, H} \vert = \infty$. 
%Note that there is not a cycle $c$ based at $v$ because in this case we are contradicting Condition (a). 
Since $v\notin H\cup B_H$, by condition (b) the set of edges which are the first edge of an $H$-compatible path is finite. In this case, we repeat the same argument as before and find $f\in B$ such that $f\gamma\in C_{v, H}$, for some $\gamma\in {\rm Path}(E)$ and $r(f)$ satisfies Condition (P).
Again in this case we are done. This finishes the proof.
\end{proof}

\begin{theorem}\label{thm:nuevaprueba}
Let $K$ be a field and $E$ an arbitrary graph. Then $L_K(E)$ is decomposable if and only if there exist two nontrivial hereditary and 
saturated subsets $H_1$ and $H_2$ 
such that $H_1\cap H_2=\emptyset$ and for every $v\in E^0\setminus (H_1\cup H_2)$, there exists at least one but finitely many paths starting at $v$ which are either $H_1$-compatible or $H_2$-compatible.
\end{theorem}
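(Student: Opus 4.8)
The plan is to prove the theorem by showing that the graph condition stated here is equivalent to the pair of conditions (a), (b) in Theorem \ref{thm:decomp} \eqref{it3:decomp}; since that theorem already identifies decomposability of $L_K(E)$ with the existence of a nonempty, proper, hereditary and saturated $H$ satisfying (a), (b), this equivalence of graph conditions yields the statement. A tool I will use repeatedly is the elementary observation that an $H$-compatible path stays outside $H$ except at its final vertex, so that compatibility depends only on the last edge: if $\sigma$ is an $H_i$-compatible path starting at $u$ and $\mu$ is any path ending at $u$ whose vertices avoid $H_i$, then $\mu\sigma$ is again $H_i$-compatible. This prepending principle is what lets me transport compatible paths backwards along the graph.

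For the forward implication, assume $L_K(E)$ is decomposable. Theorem \ref{thm:decomp} supplies a nonempty, proper, hereditary and saturated set, which I take to be $H_1$, satisfying (a) and (b); equivalently $U_{H_1,B_{H_1}}$ is clopen by Proposition \ref{prop:closed}. I then let $H_2$ be the hereditary and saturated set corresponding under Theorem \ref{thm:Lattice} to the complementary clopen invariant set $\geo\setminus U_{H_1,B_{H_1}}$; a short computation using Definitions \ref{def:USub} identifies $H_2=\{v\in E^0 : \operatorname{Path}(v,H_1)=\emptyset\}$, the vertices that cannot reach $H_1$. From this description $H_1\cap H_2=\emptyset$ is immediate, and both sets are nonempty and proper (each complementary clopen set is nonempty, and an empty hereditary set has no breaking vertices, so would force the corresponding ideal to vanish). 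Since $\geo\setminus U_{H_1,B_{H_1}}$ is also clopen, $H_2$ likewise satisfies (a), (b), so Lemma \ref{lem:compat} is available for both $H_1$ and $H_2$. Finiteness of the compatible paths from any $v\notin H_1\cup H_2$ then follows: were there infinitely many $H_i$-compatible paths, $v$ would satisfy Property $({\rm P})$ for $H_i$, and iterating Lemma \ref{lem:compat} would produce an infinite path avoiding $H_i$ all of whose vertices still connect to $H_i$, contradicting condition (a) for $H_i$.

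The delicate part of the forward direction, and the step I expect to be the main obstacle, is showing that \emph{at least one} compatible path issues from every $v\notin H_1\cup H_2$. I plan to argue by contradiction, letting $T$ be the set of vertices in $E^0\setminus(H_1\cup H_2)$ from which no $H_1$- or $H_2$-compatible path starts, and proving $T=\emptyset$. Using the prepending principle and the description $H_2=\{v:\operatorname{Path}(v,H_1)=\emptyset\}$, one checks that each $w\in T$ admits a successor again in $T$: taking a shortest path from $w$ to $H_1$ (which exists as $w\notin H_2$), if it has length at least two its first edge lands in $T$, while if it has length one then $w$ must be a breaking vertex of $H_1$, since otherwise that edge would be $H_1$-compatible. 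The subtle bookkeeping is precisely this breaking-vertex case: as $w\in B_{H_1}$ emits only finitely many edges outside $H_1$ but infinitely many into $H_1\subseteq E^0\setminus H_2$, we have $w\notin B_{H_2}$, so any edge from $w$ into $H_2$ would be $H_2$-compatible; because $w\in T$ this cannot happen, forcing the nonempty finite set of edges of $w$ leaving $H_1$ to land in $E^0\setminus(H_1\cup H_2)$ and hence in $T$. Thus every vertex of $T$ has a successor in $T$, so a hypothetical $v\in T$ generates an infinite path lying in $T\subseteq E^0\setminus H_1$ whose vertices all connect to $H_1$, contradicting condition (a). Hence $T=\emptyset$.

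For the converse, assume the existence of $H_1,H_2$ as in the statement; I will verify that $H_1$ satisfies (a) and (b), whereupon Theorem \ref{thm:decomp} delivers decomposability ($H_1$ is nonempty and, being disjoint from the nonempty $H_2$, proper). For condition (b), suppose some $v$ emits infinitely many edges $e$ with $\operatorname{Path}(r(e),H_1)\neq\emptyset$ yet $v\notin H_1\cup B_{H_1}$. First $v\notin H_2$, since an edge from $H_2$ reaching $H_1$ would violate disjointness by heredity; thus $v\in E^0\setminus(H_1\cup H_2)$. Each such edge then yields a distinct compatible path from $v$: the edge itself when $r(e)\in H_1$ (here $s(e)=v\notin H_1\cup B_{H_1}$), and otherwise $e$ followed by a compatible path from $r(e)$, which exists by hypothesis because $r(e)\notin H_1\cup H_2$. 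This gives infinitely many compatible paths from $v$, contradicting finiteness, so $v\in H_1\cup B_{H_1}$. For condition (a), given an infinite path $x$ with $x^0\subseteq E^0\setminus H_1$ all of whose vertices connect to $H_1$, the same disjointness argument places every vertex of $x$ in $E^0\setminus(H_1\cup H_2)$; prepending a compatible path $\sigma_n$ from $r(x_n)$ to the prefix $x_1\cdots x_n$ produces compatible paths from $s(x)$ that agree with $x$ on arbitrarily long initial segments, and since a compatible path must enter $H_1\cup H_2$ while $x$ never does, these are infinitely many distinct paths from $s(x)$, again contradicting finiteness. This establishes (a) and would complete the argument.
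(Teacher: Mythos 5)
Your proposal is correct, and its skeleton coincides with the paper's: both directions reduce to conditions (a) and (b) of Theorem \ref{thm:decomp}(\ref{it3:decomp}), with $H_2$ obtained from the complementary clopen invariant set via Theorem \ref{thm:Lattice} and Proposition \ref{prop:closed}, and finiteness of compatible paths obtained by iterating Lemma \ref{lem:compat}. However, several of your sub-arguments genuinely differ from the paper's, and all of them check out. For the ``at least one compatible path'' step, the paper builds an infinite path avoiding $H_1\cup H_2$ and gets its contradiction from the groupoid partition $\geo=U_{H_1,B_{H_1}}\sqcup U_{H_2,B_{H_2}}$ (an infinite path must lie in $U_{H_1}\cup U_{H_2}$), whereas you first identify $H_2=\{v\in E^0 : \operatorname{Path}(v,H_1)=\emptyset\}$ and then contradict condition (a) for $H_1$ directly; your shortest-path/breaking-vertex successor analysis for the set $T$ replaces the paper's regular-vertex/infinite-emitter case analysis, and it keeps that step purely graph-theoretic once $H_2$ is in hand. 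In the converse, your verification of (b) --- splitting on whether $r(e)\in H_1$ and prepending a compatible path from $r(e)$ otherwise --- is more direct than the paper's, which routes through $F_E(H_1)$ and breaking vertices and at one point asserts, without justification, that the compatible path supplied by the hypothesis is $H_2$-compatible (it could just as well be $H_1$-compatible; this does not break the paper's argument, since either type yields the contradiction, but your version avoids the issue entirely). Similarly, your verification of (a) via compatible paths agreeing with $x$ on arbitrarily long initial segments (hence of unbounded length, hence infinitely many) replaces the paper's case split on $|x^0|$ finite or infinite and its cycle argument. You also explicitly check nontriviality of $H_2$ (an empty hereditary set has no breaking vertices, so $U_{\emptyset,\emptyset}=\emptyset$), a point the paper glosses over. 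In short: same strategy, but your realizations of the key steps are independent of the paper's and in places cleaner.
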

\begin{proof}
Assume first that there exists a hereditary and saturated subset $H_1$ satisfying  the conditions in Theorem \ref{thm:decomp} \eqref{it3:decomp}. 
Since $U_{H_1, B_{H_1}}$ is a clopen invariant set (by Proposition \ref{prop:closed}), then $\geo\setminus U_{H_1, B_{H_1}}$ is also clopen
invariant and therefore there exits a hereditary and saturated subset $H_2$ such that $$\geo\setminus U_{H_1, B_{H_1}}=U_{H_2, B_{H_2}}$$
by Thereom~\ref{thm:Lattice}. 

Take $v\in E^0\setminus{(H_1\cup H_2)}$. Note that $v$ is not a sink since all of them are in $H_1\cup H_2$. 
We first show that there exists at least one  path starting at $v$ which is $H_1$-compatible  or $H_2$-compatible. 
By way of contradiction, suppose that this is not the case.  We claim that there exists $e\in E^1$ such that $s(e)=v$ and $r(e)\notin H_1\cup H_2$. If $v\in {\rm Reg}(E)$ then there is an edge $e$, starting at $v$, such that $r(e)\notin H_i$   because otherwise $e$ would be an $H_i$ compatible path (for $i=1, 2$). Assume $v$ is an infinite emitter. Then $r(s^{-1}(v)) \not\subseteq H_1$ because in this case every element in  $s^{-1}(v)$ is an $H_1$-compatible path, and we are assuming that this cannot happen. For the same reason $r(s^{-1}(v))\not\subseteq H_2$. If there is a finite number of edges from $s^{-1}(v)$ such that their ranges are inside $H_i$ then each of them is an $H_i$-compatible path, arriving at a contradiction. If $s^{-1}(v)= A \cup B$, where $A$ and $B$ are infinite sets and $r(f)\in H_1$ for every $f\in A$ and $r(f) \in  H_2$ for every $f\in B$ then $f$ is an $H_1$-compatible path (in the first case) or it is an $H_2$-compatible path in the second one, arriving again to a contradiction. In any case we have our claim. 

Fix $e_1$ as before. Reasoning in the same way, for every $n\geq 2$ we can find $e_n\in E^1$ such that $s(e_n)=r(e_{n-1})$ and $r(e_n)\notin H_1\cup H_2$. Define $x=e_1e_2\dots$ and apply that it is in $U_{H_1, B_{H_1}}\sqcup U_{H_2, B_{H_2}}$. 
This implies that $x\in U_{H_1}\sqcup U_{H_2}$, getting a contradiction.
Thus, there exists at least one path starting at $v$ which is either $H_1$-compatible or $H_2$-compatible.

Now we show that the number of paths starting at $v$ which are either 
$H_1$-compatible or $H_2$-compatible is finite. By way of contradiction, and without loss in generality we may assume that $\vert C_{v, H_1}\vert = \infty$. By Lemma \ref{lem:compat} there exists $e_1\in E^1$ such that $s(e_1)=v$, $r(e_1)\notin H$ and $r(e_1)$ satisfies Property (P). Applying the same reasoning to $r(e_1)$ we find $e_2\in E^1$ such that $s(e_2)=r(e_1)$, $r(e_2)\notin H$ and $r(e_2)$ satisfies Property (P). Continuing in this way we find $x=e_1e_2\dots\in E^{\infty}$ such that $x^0\subseteq E^0\setminus H$. Apply condition (a) to obtain that there exists $N\in \N$ such that ${\rm Path}(r(e_N), H)= \emptyset$, a contradiction as every vertex in $x^0$ connects to $H$.

Conversely, assume that there exist hereditary and saturated sets $H_1$ and $H_2$ satisfying the conditions in the statement. We prove that $L_K(E)$ is decomposable by showing that conditions (a) and (b) in Theorem \ref{thm:decomp} are satisfied for $H_1$.

We start by proving that condition (a) in Theorem \ref{thm:decomp} is satisfied. Let $x\in E^\infty$ be such that $x^0\subseteq E^0\setminus H_1$. By way of contradiction, assume that for every $n\in \N$ we have ${\rm Path}(r(e_n), H_1)\neq \emptyset$. We claim that $\vert x^0 \vert \neq \infty$. Assume that this is not the case. Since $s(x_n)$ connects to $H_1$, it cannot be in $H_2$ as $H_1\cap H_2= \emptyset$. Then $x^0\subseteq E^0\setminus (H_1\cup H_2)$. Since for every vertex in $E^0\setminus (H_1\cup H_2)$ there is at least one path which is $H_1$-compatible or $H_2$-compatible, if the number of vertices in $x^0$ is infinite, then there are infinitely many paths starting at $s(x)$ which are either $H_1$-compatible or $H_2$-compatible, a contradiction. 
Since $\vert x^0 \vert$ is finite, there is a cycle based at a vertex in $x^0$, contradicting again that  for any vertex in  $E^0\setminus (H_1\cup H_2)$ there are finitely many paths  which are either $H_1$-compatible or $H_2$-compatible.

Finally we prove that condition (b) in Theorem \ref{thm:decomp} is satisfied. Assume that there is a vertex $v$ such that $ |\{e\in E^1 \ \vert \ s(e) =v \text{ and } \operatorname{Path}(r(e),H_1) \neq \emptyset\}| = \infty$. Denote by $C:=\{e\in E^1 \ \vert \ s(e) =v \text{ and } \operatorname{Path}(r(e),H_1) \neq \emptyset\}$. We prove that $v \in H_1 \cup  B_{H_1}.$ By way of contradiction, assume $v\notin H_1 \cup  B_{H_1}$. 
For every $e\in C$, let $\mu_e$ be a path starting by $e$ such that $r(\mu_e)\in H_1$. We may assume that $\mu_e\in F_E(H_1)$.  By the hypothesis there is only a finite number of them which are $H_1$-compatible. Therefore, there are infinitely many which are not $H_1$-compatible. Fix one of these, say $\lambda$. Write $\lambda=\lambda_1\dots \lambda_{\vert \lambda\vert}$, where $\lambda_i\in E^1$. Since $\lambda\in F_E(H_1) $ we may say that $s(\lambda_{\vert \lambda\vert})\notin H_1$, therefore $s(\lambda_{\vert \lambda\vert})\in B_{H_1}$. Note that $s(\lambda_{\vert \lambda\vert})\notin H_2$ because otherwise $r(\lambda)\in H_1\cap H_2=\emptyset$. Apply that $s(\lambda_{\vert \lambda\vert})\notin H_1\cup H_2$ and the hypothesis to find an $H_2$-compatible path $\rho$. Then $\lambda\rho$ is $H_2$-compatible. Now, the set $\{\lambda\rho\}$ is an infinite set of $H_2$-compatible paths starting at the vertex $v$, which is a contradiction to our hypothesis. 
\end{proof}

\medskip

%%%%%%%%%%%%%%%%%%%%%%%%%%%%%%

\end{document}